 \newtheorem{thm}{Theorem}[section]
 \newtheorem{cor}[thm]{Corollary}
 \newtheorem{lem}[thm]{Lemma}
 \newtheorem{prop}[thm]{Proposition}
 \theoremstyle{definition}
 \newtheorem{dfn}[thm]{Definition}
 \newtheorem{rem}{Remark}
 \newcommand{\Rnum}{\mathbb{R}}
  \newcommand{\R}{\mathbb{R}}
 \newcommand{\mi}{\mathrm{i}}
 \newcommand{\Be}{\begin{equation}}
 \newcommand{\Ee}{\end{equation}}
 \newcommand{\Bs}{\begin{split}}
 \newcommand{\Es}{\end{split}}
 \newcommand{\PP}{\mathbb P}
  \newcommand{\Ll}{\langle}
   \newcommand{\Rr}{\rangle}
  \newcommand{\Bes}{\begin{equation*}}
 \newcommand{\Ees}{\end{equation*}}
 \newcommand{\dif}{\mathrm{d}}
  \newcommand{\tl}{\tilde}
    \newcommand{\E}{\mathbb E}
  \newcommand{\abs}[1]{\left\vert#1\right\vert}
   \newcommand{\set}[1]{\left\{#1\right\}}
 \newcommand{\norm}[1]{\left\Vert#1\right\Vert}
\title{ The Large Deviation Principle and Steady-state Fluctuation Theorem for the Entropy Production Rate of a Stochastic Process in Magnetic Fields}
\author{\rm
\noindent  Yong CHEN\\
\noindent \footnotesize School of Mathematics and Computing Science, Hunan
University of Science and Technology,\\
\noindent \footnotesize Xiangtan, Hunan, {\rm 411201},
P.R.China.\\
\rm  \noindent Hao GE\\
\noindent \footnotesize Beijing International Center for Mathematical Research\\
\noindent \footnotesize and Biodynamic Optical Imaging Center, Peking University\\
\noindent \footnotesize Beijing,  {\rm 100871}, P. R. China.\\
\noindent  Jie XIONG\\
\noindent \footnotesize Department of Mathematics, Faculty of Science and Technology, University of
Macau,\\
\noindent \footnotesize Taipa, Macau. jiexiong@umac.mo \\
\noindent  Lihu XU\\
\noindent \footnotesize Department of Mathematics, Faculty of Science and Technology, University of
Macau,\\
\noindent \footnotesize Taipa, Macau. lihuxu@umac.mo
}
\date{}
\begin{document}
 \maketitle

\begin{abstract}
Fluctuation theorem is one of the major achievements in the field of nonequilibrium statistical mechanics during the past two decades. Steady-state fluctuation theorem of sample entropy production rate in terms of large deviation principle for diffusion processes have not been rigorously proved yet due to technical difficulties. Here we give a proof for the steady-state fluctuation theorem of a diffusion process in magnetic fields, with explicit expressions of the free energy function and rate function. The proof is based on the Karhunen-Lo\'{e}ve expansion of complex-valued Ornstein-Uhlenbeck process.

{\it{Keywords}}: A Langevin equation in magnetic fields, Entropy production rate (EPR), Large deviation principle (LDP), Fluctuation theorems, Nonequilibrium, Gallavotti-Cohen symmetry, Karhunen-Lo\'{e}ve Expansion.


\end{abstract}

\section{Introduction}

Nonequilibrium phenomena are widely present in physics, chemical reactions, cellular dynamics and biological systems \cite{NP1977}. To mathematically characterize the `distance' between these systems and equilibrium, the entropy production rate (EPR), as one of the fundamental concept in nonequilibrium thermodynamics, was first proposed at the macroscopic level \cite{Prigogine1961}. Concrete expressions of EPRs in various kinds of dynamics have already been discovered, such as in fluid dynamics, diffusion processes, chemical reactions and reaction-diffusion processes \cite{Eckart40a,Eckart40b,Bridgman40}. Bridgman clarified the important equation for ``increase of entropy in the region within a closed surface'' ($dS/dt$) in terms of the difference between ``entropy which has flowed out of the region across the surface'' and ``entropy generated by irreversible processes within the surface'' which is exactly the EPR \cite{Bridgman40}.

Since 1970s, there has been a growing fascination toward nonequilibrium phenomena in stochastic systems at steady state \cite{Hill1977,Hill1995}. Equipped by the mathematical tools in the field of stochastic process, especially the perspective of trajectory, the stochastic theory of nonequilibrium steady state, taking the stationary Markovian processes as the fundamental models, has been significantly developed in the past 40 years. A comprehensive, but rather mathematical monograph has already been published \cite{JQQ2004}, and also has been applied to concrete biophysical systems \cite{ZQQ2012,GQQ2012}.

The concept of time reversibility in Markovian dynamics, in a statistical sense, for the forward stationary dynamics and its time
reversal, is regarded as being equivalent to the concept of equilibrium state in thermodynamics. The idea can even trace back to Kolmogorov \cite{Kolmogorov1936,Kolmogorov1937}. Furthermore, the EPR is defined as the time-averaged relative entropy of the distributions for the forward and reversed processes \cite{JQQ2004}. Following this definition, a mesoscopic definition of the EPR along a single trajectory (called sample EPR) was proposed, as the Radon-Nikodym derivative of the distributions for the forward and reversed processes \cite{QM91,JQQ2004}.

In 1993, a breakthrough occurred in nonequilibrium statistical mechanics, when Evans, Cohen and Morriss  \cite{ECM} found in computer simulations that the sample EPR of
a steady flow has a highly nontrivial symmetry, which is called the fluctuation theorem in the mathematical theory put forward by Gallavotti and Cohen \cite{GC}. The fluctuation theorem gives a general formula valid in nonequilibrium systems, for the logarithm of the probability ratio of observing trajectories that satisfy or ``violate'' the second law
of thermodynamics \cite{FTrev2008}.

Kurchan \cite{Kur} later pointed out that the fluctuation theorem also holds for certain diffusion processes, and Lebowitz and Spohn \cite{LeSp} extended Kurchan's results to quite general Markov processes, formulated by the large deviation principle (LDP). Both of them used the mesoscopic definition of sample EPR. However, their proof is not mathematically rigorous. In 2003, Jiang and Zhang \cite{jiangzhang} rigorously proved the steady-state fluctuation theorem of the entropy production for Markovian jumping processes.

This paper is to partially fill in the gap that there is no rigorous steady-state fluctuation theorem of the EPR for diffusion processes. Our model \eqref{cp0} is a Langevin equation which governs the motion of a charged test particle in magnetic field undergoing collisions with
particles in medium, we refer readers to \cite[Sect. 11.3]{Bal97} for more details. This type of model has been intensively studied \cite{Bal97,cl,fgl}. We shall prove in this paper LDP as well as the associated steady-state fluctuation theorem of its sample EPR, and give the explicit expressions of the rate function and the free energy function.

It turns out rather difficult to rigorously prove the LDP and the steady-state fluctuation theorem of the sample EPR for diffusion processes. Recently, Kim had attempted, in \cite{Kim}, to prove the steady-state fluctuation theorem of the sample EPR for diffusion processes both in compact spaces as well as in $R^n$. Unfortunately the proof in \cite{Kim} was not rigorous and contained several serious gaps.

There are several classical approaches to studying LDP, among which the most famous are probably G\"{a}rtner-Ellis' theorem \cite{demzeit}, Varadhan's inverse theorem \cite{demzeit}, Kifer's criterion \cite{Kif90, JNPS15} and Wu's criterion \cite{Wu01}. With a close look at the sample EPR \eqref{eprform} below, we find that G\"{a}rtner-Ellis's theorem
are probably the most promising way, because one can use Girsanov transform to remove the stochastic integral in \eqref{eprform}.
However,  essential smoothness of the related Cram\'er function in G\"{a}rtner-Ellis' theorem is usually very hard to verify. Fortunately, for our equation \eqref{cp0}, we can apply the Karhunen-Lo\'{e}ve expansion of complex-valued Ornstein-Uhlenbeck process to get an explicit form for Cram\'er function.

It seems not easy to generalize our method to general high-dimensional Ornstein-Uhlenbeck processes. The main reasons are that there seem no general Karhunen-Lo\'{e}ve type expansions for high dimension Ornstein-Uhlenbeck processes such that the stochastic coefficients
are independent,
and that Eq. \eqref{cp0} under a Girsanov transform as \eqref{e:CMG} leads to a non-stationary or even non-ergodic process.

The organization of the paper is as the following. We give notations, the main results and a strategy of the proofs in Section 2, while study the Karhunen-Lo\'{e}ve expansion and related differential equations associated to \eqref{cp0} in Section 3. These results are used in Section 4 to prove
LDP as well as the steady-state fluctuation theorem of the sample EPR.

\section{Setting and main results}
\subsection{The stochastic process in magnetic fields and its EPR}

Suppose that $B(t)=(B_1(t),B_2(t))$ is a 2-dimensional real standard Brownian motion, we consider the following stationary complex Ornstein-Uhlenbeck process $\set{Z_t}$ arising in magnetic fields:

\begin{equation}\label{cp0}
      \dif Z_t =-e^{\mi \theta} Z_t\dif t+ \sqrt{2 \cos\theta}\dif {\zeta}^B_t,\quad t\ge 0,
\end{equation}
where $\theta\in (-\frac{\pi}{2},\,\frac{\pi}{2})$, and ${\zeta}^B_t=\frac{B_1(t)+\mi B_2(t)}{\sqrt2}$ is a complex Brownian motion.
Eq. \eqref{cp0} has a unique invariant measure $\mu$ with the form (see \cite[Eq.(11.20)]{Bal97} or \cite[Eq.(1.4)]{cl}):
\begin{equation}\label{meas}
\dif\mu(z) = \frac{1}{\pi }\exp\set{-(z_1^2+z_2^2)}\dif z_1\dif z_2=\rho(z)\dif z_1\dif z_2
\end{equation}
with $z=z_1+{\rm i} z_2$.
 $Z_t$ describes the motion of a test particle in magnetic fields which undergoes the collisions from
particles in medium \cite[Sect. 11.3]{Bal97}.


Since we can identify the metrics of $\mathbb{C}$ and $\R^2$, to study Eq. \eqref{cp0}, it is equivalent to investigate the following real Ornstein-Uhlenbeck process \cite{ash,ito}:
\begin{align}
\begin{bmatrix}
\dif X_1(t)\\ \dif X_2(t) \end{bmatrix}& =- \begin{bmatrix} \cos \theta & - \sin \theta\\  \sin \theta & \cos \theta \end{bmatrix}
\begin{bmatrix} X_1(t)\\ X_2(t) \end{bmatrix} \dif t + \sqrt{\cos\theta}
\begin{bmatrix} \dif B_1(t)\\ \dif B_2(t) \end{bmatrix}. \label{langevin}
\end{align}
From \cite{JQQ2004}, the sample EPR of Eq. \eqref{langevin} is
\begin{align}
e_p(t)(\omega)&:=\frac{1}{t}\log \frac{\dif \mathbf{P}_{[0,t]}}{\dif \mathbf{P}^{-}_{[0,t]}}(\omega) \notag\\
&=\frac{1}{t}\left[\frac{2\sin^2\theta}{\cos\theta}\int_{0}^t\,\abs{X(s)}^2 \dif s+\frac{2\sin\theta}{\sqrt{\cos\theta}}\int_0^t[X_2(s),-X_1(s)]\dif {B}(s)\right], \label{eprform}
\end{align}
it is also the sample EPR of Eq. \eqref{cp0}. The EPR is
\begin{equation*}
e_p:=\E^{\mu}[e_p(t)]=\frac{2 \sin^2 \theta}{\cos \theta},
\end{equation*}
where $\E^\mu$ is the probability expectation induced by the stationary process $X_t$. By ergodic theory \cite{JQQ2004}, we have
\begin{equation} \label{e:erg}
\lim_{t \rightarrow \infty} e_p(t)=e_p \ \ \ \ \ a.s..
\end{equation}
Naturally one may ask the convergence speed of \eqref{e:erg} and its deviation behavior. Hence, LDP and fluctuation theorem of sample EPR have their importance not only in physics but also in mathematics.

The process is reversible and thus $e_p(t) \equiv 0$ for all $t \ge 0$, if and only if $\theta=0$ \cite{QH2001}. From now on, we only consider the case
$$\theta \in \left(-\frac{\pi}2,0\right) \cup \left(0,\frac{\pi} 2\right). $$

\subsection{Cram\'er  function and LDP}
Let
\begin{equation*}
   \dif {W}(t)=\dif {B}(t) -\frac{2\lambda \sin\theta}{\sqrt{\cos\theta}}[X_2(t),\,-X_1(t)]'\dif t.
\end{equation*}
By the well-known Cameron-Martin-Girsanov theorem, ${W}(t)$ is a standard 2 dimensional Brownian motion under the measure $\PP_{W}$
which is uniquely determined by
\begin{equation} \label{e:CMG}
  \frac{\dif \PP_{W}}{\dif \PP_{B}}\big |_{\mathcal F_t}=\exp\set{-\frac{2\lambda^2 \sin^2\theta}{\cos \theta}\int_0^t \abs{X(s)}^2 \dif s +\frac{ {2}\lambda\sin\theta}{\sqrt{\cos\theta}}\int_0^t[X_2(s),-X_1(s)] \dif B(s)  } .
\end{equation}
Under the new measure $\PP^W$, then Eq. \eqref{langevin} is rewritten as
\begin{align}
\begin{bmatrix}
\dif Y_1(t)\\ \dif Y_2(t) \end{bmatrix}& =- \begin{bmatrix} \cos \theta & - (1+2\lambda)\sin \theta\\  (1+2\lambda) \sin \theta & \cos \theta \end{bmatrix}
\begin{bmatrix} Y_1(t)\\ Y_2(t) \end{bmatrix} \dif t + \sqrt{\cos\theta}
\begin{bmatrix} \dif W_1(t)\\ \dif W_2(t) \end{bmatrix} \nonumber\\
&=\begin{bmatrix} -r & d\\  -d & -r \end{bmatrix}
\begin{bmatrix} Y_1(t)\\ Y_2(t) \end{bmatrix} \dif t + \sqrt{\cos\theta}
\begin{bmatrix} \dif W_1(t)\\ \dif W_2(t) \end{bmatrix}.\label{langevin new}
\end{align}
$\mu$ is also the unique invariant measure of $Y(t)$.
Since the process $X(t)$ is stationary, so is $Y(t)$.

Define
$$\Lambda_t(\lambda)=\frac{1}{t}\log \E^{\mu} \exp\left\{t \lambda e_p(t)\right\},$$
by \eqref{e:CMG} and \eqref{langevin new} we have
\begin{align}\label{crm func}
  \Lambda_t(\lambda)
    &=\frac{1}{t}\log \E^{\mu}\exp\set{\frac{2\lambda\sin^2\theta}{\cos\theta}\int_{0}^t\,\abs{X(s)}^2 \dif s
    +\frac{ 2\lambda\sin\theta}{\sqrt{\cos\theta}}\int_0^t[X_2(s),-X_1(s)] \dif \vec{B}(s) }\nonumber\\
    &=\frac{1}{t}\log \E^{\mu}\exp\set{\frac{2\sin^2\theta}{\cos\theta}\lambda(1+\lambda)\int_{0}^t\,\abs{Y(s)}^2 \dif s}.
\end{align}
For a given $\lambda$, if
$\lim_{t \rightarrow \infty} \Lambda_t(\lambda)$ exists, we denote
$$\Lambda(\lambda):=\lim_{t \rightarrow \infty} \Lambda_t(\lambda)$$
and
$$\mathcal D_{\Lambda}:=\{\lambda \in \R: \Lambda(\lambda)<\infty\}.$$
$\Lambda$ is called Cram\'er function \cite{demzeit}.

Let us next recall the definition of LDP \cite{demzeit}.
Let $\mathcal X$ be a complete separable metric space, $\mathcal B(\mathcal X)$ the Borel $\sigma$-field of $\mathcal X$, and $\{X_t\}_{t\ge0}$ a family of  stochastic processes valued in $\mathcal X$.

\begin{dfn}
 \label{d:LDP}
 $\{X_t\}_{t\ge0}$ satisfies a large deviation principle (LDP) if there exist a family of positive numbers $\{h(t)\}_{t\ge0}$ which tends to $+\infty$ and a function $I(x)$ which maps $\mathcal X$ into $[0,+\infty]$ satisfying the following conditions:
  \begin{itemize}
  \item[(i)]  for each $l<+\infty$, the level set $\{x:I(x)\le l\}$ is compact in $\mathcal X$;
  \item[(ii)] for each closed subset $F$ of $\mathcal{X}$,
              $$
                \limsup_{t\rightarrow \infty}\frac1{h(t)}\log\mathbb P (X_t\in F)\leq- \inf_{x\in F}I(x);
              $$
   \item[(iii)]for each open subset $G$ of $\mathcal{X}$,
              $$
               \liminf_{t\rightarrow \infty}\frac1{h(t)}\log\mathbb P(X_t\in G)\geq- \inf_{x\in G}I(x),
              $$
   \end{itemize}
here $h(t)$ is called the  speed function and $I(x)$ is the rate function.
\end{dfn}

\subsection{The main results and the strategy for their proofs}
Our main results are the following two theorems, the first one is about large deviation, while the second is the steady-state fluctuation theorem.
\begin{thm}[LDP]\label{main_thm}
The sample EPR $e_p(t)$ defined by \eqref{eprform}, satisfies an LDP with $h(t)=t$ in Definition \ref{d:LDP}. The rate function is
\begin{equation}\label{rate func}
  I(x)=\left\{
      \begin{array}{ll}
     - \frac12(1-\sqrt{\frac{x^2(1+2rc)c}{x^2c+2r}})x+\frac{r}{\pi}\int_0^{\infty}\log (1-\frac{x^2c^2-1}{(2r+x^2c)(1+y^2)c})\dif y,  & x\ge 0 ,  \\
      -\frac12(1+\sqrt{\frac{x^2(1+2rc)c}{x^2c+2r}})x+\frac{r}{\pi}\int_0^{\infty}\log (1-\frac{x^2c^2-1}{(2r+x^2c)(1+y^2)c})\dif y, &x<0,
      \end{array}
\right.
\end{equation}
where $c=\frac{\cos\theta}{2\sin^2\theta}$.
\end{thm}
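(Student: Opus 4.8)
\emph{Proof plan.} I would apply the G\"{a}rtner--Ellis theorem on $\mathcal X=\R$ to the real random variables $e_p(t)$, exploiting that the Cameron--Martin--Girsanov reduction has already brought, via \eqref{crm func}, the scaled cumulant generating function to the form
$$\Lambda_t(\lambda)=\frac1t\log\E^\mu\exp\set{\frac{\lambda(1+\lambda)}{c}\int_0^t\abs{Y(s)}^2\dif s},\qquad c=\frac{\cos\theta}{2\sin^2\theta},$$
where $Y$ is the stationary complex Ornstein--Uhlenbeck process \eqref{langevin new} with $r=\cos\theta$ and rotation parameter $d=(1+2\lambda)\sin\theta$; note that $\Lambda_t$ depends on $\lambda$ only through $\lambda(1+\lambda)$, so $\Lambda_t(\lambda)=\Lambda_t(-1-\lambda)$ (the Gallavotti--Cohen symmetry at finite time). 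The decisive step is to evaluate, in closed form, the limit $\Lambda(\lambda)=\lim_{t\to\infty}\Lambda_t(\lambda)$ together with its effective domain $\mathcal D_\Lambda$.

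For this I would use the Karhunen--Lo\`{e}ve expansion of $Y$ on $[0,t]$ from Section 3. The covariance kernel of $Y$ is $K(s,s')=e^{-r\abs{s-s'}}e^{-\mi d(s-s')}$, so a unitary substitution removing the phase $e^{-\mi ds}$ turns the eigenvalue equation $\int_0^t K(s,s')\varphi(s')\dif s'=\mu\varphi(s)$ into the classical real one with kernel $e^{-r\abs{s-s'}}$; applying $-\partial_s^2+r^2$ then converts it into $\psi''+\omega^2\psi=0$ with Robin boundary conditions, whose eigenvalues are $\mu_k(t)=\frac{2r}{r^2+\omega_k(t)^2}$, the $\omega_k(t)$ being the roots of a transcendental equation with $\omega_k(t)\sim k\pi/t$ and counting density $t/\pi$. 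Consequently in $\int_0^t\abs{Y(s)}^2\dif s=\sum_k\mu_k(t)\abs{\xi_k}^2$ the $\xi_k$ are i.i.d.\ standard complex Gaussians, so the $\abs{\xi_k}^2$ are unit-mean exponential and
$$\Lambda_t(\lambda)=-\frac1t\sum_k\log\Bigl(1-\tfrac{\lambda(1+\lambda)}{c}\,\mu_k(t)\Bigr),$$
finite exactly when $\frac{\lambda(1+\lambda)}{c}\sup_k\mu_k(t)<1$. As $t\to\infty$ one has $\sup_k\mu_k(t)\uparrow\frac2r$, hence $\mathcal D_\Lambda=\{\lambda:\lambda(1+\lambda)\le\frac{rc}{2}\}=[\lambda_-,\lambda_+]$ with $\lambda_\pm=\frac12(-1\pm\sqrt{1+2rc})$, and the displayed sum is a Riemann sum converging to
$$\Lambda(\lambda)=-\frac r\pi\int_0^\infty\log\Bigl(1-\frac{2\lambda(1+\lambda)}{rc(1+y^2)}\Bigr)\dif y.$$

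Once $\Lambda$ is known, the rest is comparatively routine. I would check $0\in\mathrm{int}\,\mathcal D_\Lambda$, that $\Lambda$ is convex (a limit of convex functions) and lower semicontinuous, and, differentiating under the integral sign, that $\Lambda\in C^1(\lambda_-,\lambda_+)$ with
$$\Lambda'(\lambda)=\frac{1+2\lambda}{c\sqrt{1-2\lambda(1+\lambda)/(rc)}};$$
essential smoothness is then immediate, since as $\lambda\uparrow\lambda_+$ the radicand decreases to $0$ while $1+2\lambda_+=\sqrt{1+2rc}>0$, so $\Lambda'(\lambda)\to+\infty$, and by the $\lambda\mapsto-1-\lambda$ symmetry $\Lambda'(\lambda)\to-\infty$ as $\lambda\downarrow\lambda_-$. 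The G\"{a}rtner--Ellis theorem then yields the LDP for $e_p(t)$ with speed $h(t)=t$ and good rate function $I=\Lambda^\ast$. It remains to compute the Legendre transform: solving $\Lambda'(\lambda)=x$ gives $1+2\lambda=\pm\sqrt{\frac{x^2c(1+2rc)}{x^2c+2r}}$ with the sign $+$ for $x\ge0$ (equivalently, the maximizing $\lambda\ge-\tfrac12$) and $-$ for $x<0$; plugging this into $I(x)=\lambda x-\Lambda(\lambda)$ and using $\lambda(1+\lambda)=\tfrac14\bigl((1+2\lambda)^2-1\bigr)$ to rewrite the integrand reproduces exactly \eqref{rate func}, and at $x=e_p=\Lambda'(0)=1/c$ the square root equals $1$ and the logarithm vanishes, so $I(e_p)=0$ as it should.

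The technical heart, and the main obstacle, is the second paragraph: rigorously establishing the Karhunen--Lo\`{e}ve expansion with \emph{independent} coefficients for the complex-valued process, obtaining sharp asymptotics of the eigenvalues $\mu_k(t)$ --- notably $\sup_k\mu_k(t)\uparrow2/r$ and the counting density $t/\pi$ from the transcendental equation --- and controlling $-\frac1t\sum_k\log(1-\frac{\lambda(1+\lambda)}{c}\mu_k(t))$ uniformly enough to pass to the integral, in particular near the endpoints $\lambda_\pm$ where the top factor $1-\frac{\lambda(1+\lambda)}{c}\mu_1(t)$ itself tends to $0$.
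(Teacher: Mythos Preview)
Your proposal is correct and follows essentially the same route as the paper: Girsanov reduction to a quadratic functional, Karhunen--Lo\`eve expansion of the complex OU process with eigenvalues $\mu_k=2r/(r^2+\omega_k^2)$ obtained from a second-order ODE with Robin boundary conditions, Riemann-sum passage to the integral form of $\Lambda$, verification of essential smoothness, and G\"artner--Ellis followed by the explicit Legendre transform. One small correction: the paper shows that $\sup_k\mu_k(t)=2/r$ \emph{exactly} for every $t>0$ (the root $\tilde\omega_0=0$ of $\omega/r=-\tan(\omega T/2)$ is always present), so $\mathcal D_\Lambda$ is the \emph{open} interval $(\lambda_-,\lambda_+)$, not the closed one you wrote; this actually simplifies the endpoint analysis you flagged as delicate, since for $\ell\ge r/2$ the expectation is already infinite at every finite $t$.
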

\begin{thm}[Fluctuation theorem]\label{main_thm 2}
The Cram\'{e}r's function (or say: free energy function) of $\set{e_p(t):\,t\ge 0} $ is
\begin{equation}\label{crm func 000}
   \Lambda(\lambda)=\left\{
      \begin{array}{ll}
     -F(\ell), &\quad \text{for } \lambda \in \left(-\frac 12-\frac 12 \sqrt{1+\frac{r \cos \theta}{\sin^2 \theta}},-\frac 12+\frac 12 \sqrt{1+\frac{r \cos \theta}{\sin^2 \theta}}\right) ,  \\
     \infty , &\quad \text{for } \lambda \notin \left(-\frac 12-\frac 12 \sqrt{1+\frac{r \cos \theta}{\sin^2 \theta}},-\frac 12+\frac 12 \sqrt{1+\frac{r \cos \theta}{\sin^2 \theta}}\right),
      \end{array}
\right.
\end{equation}
where
\begin{equation}\label{lamd}
   \ell=\frac{2\sin^2\theta}{\cos\theta}\lambda(1+\lambda),
\end{equation}
and for $\ell<\frac{r}{2}$,
\begin{equation}\label{fell}
  F(\ell)= \int_0^{\infty} \log(1- \frac{2\ell r }{r^2+  \pi^2 y^2} )\dif y.
\end{equation}
Furthermore,
$\Lambda(\lambda)$ and $I(x)$ have the following properties:
\begin{equation}\label{fluc theorem}
   \Lambda(\lambda)=\Lambda(-(1+\lambda)),\quad \forall \lambda\in \Rnum^1,\quad I(x)=I(-x)-x,\quad \forall x\in \Rnum.
\end{equation}
\end{thm}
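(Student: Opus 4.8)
## Proof Proposal

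The plan is to compute $\Lambda(\lambda) = \lim_{t\to\infty}\Lambda_t(\lambda)$ explicitly via the last line of \eqref{crm func}, i.e.\ we must evaluate
\[
\lim_{t\to\infty}\frac1t\log\E^\mu\exp\set{\ell\int_0^t\abs{Y(s)}^2\dif s},\qquad \ell=\frac{2\sin^2\theta}{\cos\theta}\lambda(1+\lambda),
\]
where $Y$ is the stationary Ornstein--Uhlenbeck process \eqref{langevin new} with drift matrix $\begin{bmatrix}-r & d\\ -d & -r\end{bmatrix}$. The key device, as advertised in the introduction, is the Karhunen--Lo\'{e}ve expansion of the complex-valued OU process on $[0,t]$: writing $Y$ (identified with a complex process) in an orthonormal basis of eigenfunctions of its covariance operator, the functional $\int_0^t\abs{Y(s)}^2\dif s$ becomes $\sum_k \nu_k |\xi_k|^2$ with $\{\xi_k\}$ i.i.d.\ standard complex Gaussians and $\{\nu_k\}$ the eigenvalues of the covariance operator — this is exactly the place where the complex (rather than general $2$-dimensional) structure is essential, since it is what makes the stochastic coefficients independent. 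Then the exponential moment factorizes: $\E\exp\set{\ell\sum_k\nu_k|\xi_k|^2} = \prod_k (1-\ell\nu_k)^{-1}$ whenever $\ell\nu_k<1$ for all $k$, and so $\Lambda_t(\lambda) = -\frac1t\sum_k\log(1-\ell\nu_k)$.

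Next I would identify the eigenvalues $\nu_k = \nu_k(t)$. These come from a Fredholm integral equation with the stationary OU covariance kernel as its kernel; equivalently (and this is the "related differential equations" referred to for Section~3) they are characterized by a second-order boundary value problem whose eigenvalues, as $t\to\infty$, have a limiting density. I expect $\frac1t\sum_k f(\nu_k) \to \frac1{2\pi}\int_{\R} f\big(\widehat{\text{something}}(y)\big)\,\dif y$ by a Szeg\H{o}-type / Kac--Achiezer limiting argument, where the limiting spectral curve is read off from the spectral density of the stationary process $Y$: concretely $\nu_k$ accumulates along the values $\frac{2r\cos\theta/(2\sin^2\theta)}{r^2+\pi^2 y^2}$ — matching the integrand in \eqref{fell} once one writes $F(\ell)=\int_0^\infty\log(1-2\ell r/(r^2+\pi^2 y^2))\,\dif y$. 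Carrying this out gives $\Lambda(\lambda)=-F(\ell)$ for those $\lambda$ with $\ell$ small enough that $1-\ell\nu_k>0$ uniformly, i.e.\ $\ell<r/2$, which unwinds (using $\ell=\frac{2\sin^2\theta}{\cos\theta}\lambda(1+\lambda)$ and solving the quadratic inequality) to precisely the interval $\lambda\in\big(-\tfrac12-\tfrac12\sqrt{1+r\cos\theta/\sin^2\theta},\,-\tfrac12+\tfrac12\sqrt{1+r\cos\theta/\sin^2\theta}\big)$; outside that interval some $\nu_k\ell\ge1$, the integral diverges, and $\Lambda(\lambda)=\infty$.

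For the symmetry statements \eqref{fluc theorem}: the identity $\Lambda(\lambda)=\Lambda(-(1+\lambda))$ is immediate once we have the formula, because $\ell=\frac{2\sin^2\theta}{\cos\theta}\lambda(1+\lambda)$ is manifestly invariant under $\lambda\mapsto-(1+\lambda)$ (this maps the characterizing interval to itself as well, and on the complement both sides are $\infty$) — so no computation is needed beyond observing the invariance of the quadratic. The relation $I(x)=I(-x)-x$ then follows from the Gallavotti--Cohen symmetry of $\Lambda$ via Legendre duality: since $I$ is the Legendre transform of $\Lambda$ (which is delivered by the G\"{a}rtner--Ellis theorem in the proof of Theorem~\ref{main_thm}), $I(x)=\sup_\lambda\{\lambda x-\Lambda(\lambda)\}$, and substituting $\lambda\mapsto-(1+\lambda)$ inside the supremum and using $\Lambda(-(1+\lambda))=\Lambda(\lambda)$ turns $\sup_\lambda\{\lambda x-\Lambda(\lambda)\}$ into $\sup_\lambda\{(-1-\lambda)x-\Lambda(\lambda)\} = -x+\sup_\lambda\{-\lambda x-\Lambda(\lambda)\}=-x+I(-x)$. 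One then reconciles this with the closed form \eqref{rate func} by a direct (if tedious) Legendre computation: optimizing $\lambda x-\Lambda(\lambda)$ over the interval, the first-order condition together with \eqref{lamd} produces the square-root terms $\pm\sqrt{x^2(1+2rc)c/(x^2c+2r)}$ with $c=\cos\theta/(2\sin^2\theta)$, and substituting $\ell(\lambda^\ast)$ into $F$ yields the log-integral term, after the change of variables reconciling $\int_0^\infty\log(1-2\ell r/(r^2+\pi^2y^2))\dif y$ with $\frac{r}{\pi}\int_0^\infty\log(1-(x^2c^2-1)/((2r+x^2c)(1+y^2)c))\dif y$.

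\medskip

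\textbf{Main obstacle.} The delicate step is the passage $\frac1t\sum_k\log(1-\ell\nu_k(t))\to F(\ell)$: one must (a) solve for the eigenvalues $\nu_k(t)$ of the finite-interval covariance operator of the stationary complex OU process — which requires the explicit Karhunen--Lo\'{e}ve analysis and the associated ODE boundary problem of Section~3, including controlling the boundary-layer corrections to the bulk transcendental equation — and (b) justify the limit uniformly in $\lambda$ on compact subsets of the good interval, i.e.\ that $1-\ell\nu_k(t)$ stays bounded away from $0$, so that the logarithm is integrable and the Riemann-sum/Szeg\H{o} limit is legitimate; the boundary-of-interval case $\ell\uparrow r/2$ needs separate care since the smallest factor $1-\ell\nu_k$ degenerates and one must check the exponential moment genuinely blows up (i.e.\ $\Lambda_t(\lambda)\to\infty$) rather than merely failing to converge. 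Once $\Lambda$ is in hand with this good-interval structure, applying G\"{a}rtner--Ellis requires checking essential smoothness (steepness at the endpoints of $\mathcal D_\Lambda$), which is the other point that is typically hard in this type of argument but here is tractable because $F$ has an explicit singularity as $\ell\uparrow r/2$.
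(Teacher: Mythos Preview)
Your proposal is correct and follows essentially the same route as the paper: Karhunen--Lo\`{e}ve expansion of the complex OU process, explicit eigenvalues via the associated second-order ODE boundary problem, a Riemann-sum limit (the paper does this by an elementary squeeze using the interlacing bounds \eqref{bdx1} on the roots of the transcendental equations rather than invoking a general Szeg\H{o}-type theorem) to obtain $-F(\ell)$, and then the symmetry $\Lambda(\lambda)=\Lambda(-(1+\lambda))$ and $I(x)=I(-x)-x$ exactly via the invariance of $\ell(\lambda)$ and Legendre duality as you describe. One simplification relative to your ``main obstacle'': the largest eigenvalue of the covariance operator on $[0,T]$ is \emph{exactly} $2/r$ for every $T>0$ (since $\tilde\omega_0=0$ is always a root of $\omega/r=-\tan(\omega T/2)$), so for $\ell\ge r/2$ the moment generating function is already infinite at each finite $T$ and no separate boundary-layer analysis is required.
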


We shall prove the above theorems by finding an explicit form of Cram\'er function $\Lambda$ and checking that it satisfies the three conditions in Gartner-Ellis' Theorem. We shall use the following strategy to figure out the explicit form of Cram\'er function:

(1). We rewrite Eq. \eqref{langevin new} as a complex stochastic equation by setting $Z_t=Y_1(t)+\mi Y_2(t)$.

(2). Under the framework of complex-normal system, by Karhunen-Lo\'{e}ve expansion we write $(Z_t)_{0 \le t \le T}$ under a special orthonormal basis $\{e_k\}_{k \ge 1}$ of
$L^2([0,T];\mathbb C)$ as $Z_t=\sum_{k \ge 1} w_k e_k(t)$
such that $\{w_k\}_{k \ge 1}$ are independent normal-distributed sequence. Hence,
\
\Be
\E \exp\{\int_0^T |Z_s|^2 \dif s\}=\prod_{k \ge 1} \E e^{|w_k|^2}
\Ee

(3). To find the special orthonormal basis $\{e_k\}_{k \ge 1}$ in (2), we need to solve eigenvalues problem of a trace class integral operator
  \begin{equation*}
     Kf(s)=\int_0^T R(t,s)f(s)\dif t,\quad  f \in L^2([0,T];\mathbb C)
       \end{equation*}
  with $R(t,s)$ is a kernel satisfying $R(s,t)=\overline{R(t,s)}$. This problem turns out to look for solutions of a complex ordinary differential equations.

(4). We demonstrate the argument in  (3) when $\{Z_t: 0 \le t \le T\}$ is a real normal system. To look for the special orthonormal basis
$\{e_k\}_{k \ge 0}$ in $L^2([0,T];\mathbb R)$ such that
$Z_t=\sum_{k \ge 1} w_k e_k(t)$ and $w_1,w_2,...$ are independent, we only need to check
$$\E\Ll Z, e_i\Rr \Ll Z, e_j\Rr=0 \ \ \  \ \ \forall  \ i \neq j,$$
i.e.,
\Be \label{e:KReal}
\int_0^T \int_0^T \E(Z_sZ_t) e_i(s) \dif s e_j(t) \dif t=0.
\Ee
Let $R(t,s)=\E(Z_sZ_t)$ and $Ke_i(t)=\int_0^T R(t,s) e_i(s) \dif s$, as long as $e_i$ is an eigenfuction of $K$ with $\lambda_i$, i.e.,

\Be \label{e:KReEi}
K e_i=\lambda_i e_i,
\Ee
(15) immediately follows. Eq. \eqref{e:KReEi} is equivalent to solve an ordinary differential equation \cite[Chapter 1]{kuo}.

However, the expansion for a complex-normal system is much more
complicated than the above real normal case, we need to use Karhunen-Lo\'{e}ve expansion.


\section{Karhunen-Lo\'{e}ve expansion of complex-valued Ornstein-Uhlenbeck process}
\subsection{Some preliminary of Karhunen-Lo\'{e}ve expansion}
In this section, we adopt the formulation and notations of \cite{ito} and \cite[Chapter 1]{ash}.
\begin{dfn}
  Let $\xi=\xi_1+\mi \xi_2$ be a complex random variable. If $\xi_1$ and $ \xi_2$ are independent real random variables and subject to the same distribution
  $N(0,\frac{a}{2})$ with $a\ge 0$, $\xi$ is called a complex-normal random variable and we denote it by $ \mathcal{CN}(0,a)$. A system of complex random variables $\Xi=\set{\xi_{\lambda},\lambda\in \Lambda}$
  is called complex-normal if every linear combination of $\Xi$ is a complex-normal variable.
\end{dfn}
\begin{rem}
  The above definition is from \cite{ito}, which is more strict than the complex jointly Gaussian random variables given in \cite[p19]{ash}. This difference
ensure the independent property in the Karhunen-Lo\'{e}ve expansion.
\end{rem}

We cite the following well-known facts about the Karhunen-Lo\'{e}ve expansion \cite[Theorem 1.4.1]{ash} for further use.
\begin{thm}\label{kl exp}
  Let $\set{X_t,a\le t\le b}$, $a,b$ finite, be a complex-valued $L^2$ process with zero mean and continuous covariance $R(\cdot,\cdot)$. The integral operator $K$ associated with $R$ is
  \begin{equation}\label{kop}
     Kf(t)=\int_a^b R(t,s)f(s)\dif s,\quad s\in[a,b],\quad f\in L^2([a,b], \mathbb{C}).
  \end{equation}
  Let $\set{e_k,k=1,2,\dots}$
  be an orthonormal basis for the space spanned by the eigenfunctions of the nonzero eigenvalues of the operator $K$, with $e_k$
  taken as an eigenfunction corresponding to the eigenvalue $\lambda_k$. Then
  \begin{equation} \label{e:XW}
     X_t=\sum_{k\ge 1} w_k e_k(t),\quad a\le t\le b,
  \end{equation}
  where $w_k=\int_a^b X_t\bar{e}_k(t)\dif t$, and the $w_k$ are orthogonal random variables with
  \begin{equation} \label{e:gam}
  \E[w_k]=0, \E[\abs{w_k}^2]=\gamma_k.
  \end{equation}
  The series converges in $L^2$, uniformly in $t$. If the original process $X_t$ is a complex-normal system, then the variables $w_k$ are complex-normal and are stochastically independent, which implies that the above series also converges almost surely.
\end{thm}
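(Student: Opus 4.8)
\emph{Proof plan.} The plan is to reduce the statement to the spectral theory of the integral operator $K$ together with Mercer's theorem. First I would record the two structural facts about the kernel that are needed. Since $R$ is a covariance it is Hermitian, $R(s,t)=\overline{R(t,s)}$, and positive semidefinite: for any $c_1,\dots,c_m\in\Cnum$ and $t_1,\dots,t_m\in[a,b]$ one has $\sum_{i,j}c_i\overline{c_j}\,R(t_i,t_j)=\E\abs{\sum_i c_i X_{t_i}}^2\ge 0$. Combined with the assumed continuity of $R$ on the compact square $[a,b]^2$, this makes $K$ a compact (indeed Hilbert--Schmidt), self-adjoint, nonnegative operator on $L^2([a,b];\Cnum)$. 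Hence $K$ has an at most countable family of eigenvalues $\lambda_1\ge\lambda_2\ge\cdots>0$ with an orthonormal system of eigenfunctions $\set{e_k}$, and Mercer's theorem yields the absolutely and uniformly convergent expansion $R(t,s)=\sum_{k\ge1}\lambda_k e_k(t)\overline{e_k(s)}$ on $[a,b]^2$; in particular $\sum_k\lambda_k\abs{e_k(t)}^2=R(t,t)$ uniformly in $t$. Note also that continuity of $R$ gives mean-square continuity of $X$, $\E\abs{X_t-X_s}^2=R(t,t)-R(t,s)-R(s,t)+R(s,s)\to0$, so the integrals defining the $w_k$ make sense as $L^2$-limits of Riemann sums.

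Next I would set $w_k=\int_a^b X_t\,\overline{e_k(t)}\,\dif t$ and compute second moments by Fubini: $\E[w_k\overline{w_j}]=\int_a^b\int_a^b R(t,s)\,\overline{e_k(t)}\,e_j(s)\,\dif s\,\dif t=\int_a^b\overline{e_k(t)}\,(Ke_j)(t)\,\dif t=\lambda_j\Ll e_j,e_k\Rr=\gamma_j\delta_{jk}$ with $\gamma_k:=\lambda_k$. This gives $\E[w_k]=0$, $\E\abs{w_k}^2=\gamma_k$, and orthogonality of the $w_k$. Then, writing $S_n(t)=\sum_{k=1}^n w_k e_k(t)$ and using the cross term $\E[X_t\overline{w_k}]=(Ke_k)(t)=\lambda_k e_k(t)$, a direct expansion of $\E\abs{X_t-S_n(t)}^2=\E\abs{X_t}^2-2\,\mathrm{Re}\,\E[X_t\overline{S_n(t)}]+\E\abs{S_n(t)}^2$ collapses to $\E\abs{X_t-S_n(t)}^2=R(t,t)-\sum_{k=1}^n\lambda_k\abs{e_k(t)}^2$, which tends to $0$ uniformly in $t\in[a,b]$ by the Mercer expansion of $R(t,t)$. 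This establishes \eqref{e:XW} with $L^2$ convergence, uniform in $t$.

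For the complex-normal case, I would argue that each $w_k$, being an $L^2$-limit of complex linear combinations of the $X_t$'s, belongs to the $L^2$-closure of the linear span of the process; since complex-normality in the strict (Itô) sense is preserved under $L^2$-limits, the whole family $\set{w_k}_{k\ge1}$ is again a complex-normal system. For such a system pairwise uncorrelatedness forces joint independence --- this is exactly the point at which the strict definition (rather than mere joint Gaussianity) is used. Once the $w_k$ are independent and centered, $\sum_k w_k e_k(t)$ is a series of independent summands converging in $L^2$, hence it converges almost surely by the Itô--Nisio / Lévy theorem on sums of independent random variables.

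The main obstacle I expect is the correct invocation of Mercer's theorem: verifying its hypotheses (continuity plus positive semidefiniteness of the Hermitian kernel) and, crucially, using the \emph{uniform} convergence of $\sum_k\lambda_k\abs{e_k(t)}^2$ to $R(t,t)$, which is precisely what upgrades the mean-square convergence of the Karhunen--Loève series to convergence uniform in $t$. The second delicate point is the ``uncorrelated $\Rightarrow$ independent'' step for a strictly complex-normal system, which must be drawn from the definition of $\mathcal{CN}$ and not from real joint Gaussianity.
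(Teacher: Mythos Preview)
Your proof is correct and follows the classical Mercer-theorem route to the Karhunen--Lo\`eve expansion. Note, however, that the paper does not prove this theorem at all: it is quoted verbatim as a known result from \cite[Theorem~1.4.1]{ash} and used as a black box. So there is nothing to compare at the level of argument; you have simply supplied the standard proof of a result the authors chose to cite. Your handling of the two delicate points --- invoking Mercer for the uniform convergence of $\sum_k\lambda_k|e_k(t)|^2$ to $R(t,t)$, and using the strict It\^o definition of complex-normality to pass from uncorrelatedness to independence --- is exactly right and matches the reason the paper insists on that definition in the remark following the statement.
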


In case of real normal system, the following type corollary is well-known ( \cite{demart}). Now we give a version for the complex-normal system.
Note that $\gamma_k \ge 0$  for all $k \ge 1$ in Theorem \ref{kl exp}. Without loss of generality, we assume that $\gamma_1 \ge \gamma_2 \ge ....$
\begin{cor}\label{cor3-0}
Let the complex valued process $\{X_t: a \le t \le b\}$ be
complex normal and let $\{\gamma_k: k \ge 1\}$ be the eigenvalues defined by \eqref{e:gam} with order $\gamma_1 \ge \gamma_2 \ge ....$.
Then, for all $z \in \mathbb C$, we have
\begin{equation}\label{eexp}
    \E[\exp\set{z \int_a^b \abs{X(t)}^2\dif t}]=\left\{
      \begin{array}{ll}
     \prod_{k=1}^{\infty} \frac{1}{1-z\gamma_k}, &\quad \text{for } \Re{z}<\frac{1}{\gamma_1} ,  \\
     \infty , &\quad \text{for } \Re{z}\ge \frac{1}{\gamma_1},
      \end{array}
\right.
\end{equation}
where $\Re{z}$ is the real part of $z$.

\end{cor}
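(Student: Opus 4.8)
The plan is to reduce the statement to the Karhunen--Lo\'eve expansion of Theorem~\ref{kl exp} together with the elementary Laplace transform of a sum of independent exponential random variables; the passage to the limit will then be a matter of monotone and dominated convergence. Throughout we may assume $\gamma_1>0$, the case $X\equiv 0$ being trivial.

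First I would use that, $X$ being a complex-normal system, Theorem~\ref{kl exp} yields $X_t=\sum_{k\ge1}w_ke_k(t)$ with the $w_k$ independent, $w_k\sim\mathcal{CN}(0,\gamma_k)$, and the series converging in $L^2$ uniformly in $t\in[a,b]$. Since $\{e_k\}$ is orthonormal in $L^2([a,b],\mathbb C)$, Parseval's identity gives $\int_a^b\bigl|\sum_{k=1}^n w_ke_k(t)\bigr|^2\dif t=\sum_{k=1}^n|w_k|^2$ for every $n$. Letting $n\to\infty$, the left-hand side converges in $L^1(\Omega)$ to $\int_a^b|X(t)|^2\dif t$ (by the uniform $L^2$ convergence, since $\E\int_a^b|X_t-\sum_{k=1}^n w_ke_k(t)|^2\dif t\le(b-a)\sup_{t}\E|X_t-\sum_{k=1}^n w_ke_k(t)|^2\to0$), while the right-hand side increases almost surely to $\sum_{k\ge1}|w_k|^2$; this yields the key pathwise identity
\[
\int_a^b|X(t)|^2\dif t=\sum_{k\ge1}|w_k|^2\qquad\text{a.s.},
\]
the common value being a.s.\ finite because $\E\sum_{k}|w_k|^2=\sum_{k}\gamma_k=\operatorname{tr}(K)<\infty$.

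Next I would compute the one-dimensional transform. Writing $w_k=\xi_k^{(1)}+\mi\xi_k^{(2)}$ with $\xi_k^{(1)},\xi_k^{(2)}$ i.i.d.\ $N(0,\gamma_k/2)$, the variable $|w_k|^2=(\xi_k^{(1)})^2+(\xi_k^{(2)})^2$ is exponentially distributed with mean $\gamma_k$. The Gaussian identity $\E e^{z\xi^2}=(1-2z\sigma^2)^{-1/2}$ for $\xi\sim N(0,\sigma^2)$, valid for $\Re z<1/(2\sigma^2)$ with the principal branch (then $1-2z\sigma^2$ lies in the open right half-plane), gives $\E\exp\{z|w_k|^2\}=(1-z\gamma_k)^{-1}$ for $\Re z<1/\gamma_k$, and $\E\exp\{z|w_k|^2\}=+\infty$ for real $z\ge1/\gamma_k$. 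By independence of the $w_k$,
\[
\E\exp\Bigl\{z\textstyle\sum_{k=1}^n|w_k|^2\Bigr\}=\prod_{k=1}^n\frac{1}{1-z\gamma_k}.
\]

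Finally I would pass to the limit. If $\Re z<1/\gamma_1\le1/\gamma_k$ for all $k$, every factor above is nonzero and, since $\sum_k\gamma_k<\infty$, the infinite product $\prod_{k\ge1}(1-z\gamma_k)^{-1}$ converges absolutely. For real $z_0\in(0,1/\gamma_1)$ monotone convergence gives $\E\exp\{z_0\sum_k|w_k|^2\}=\prod_{k\ge1}(1-z_0\gamma_k)^{-1}<\infty$; together with the trivial bound $1$ when $\Re z\le0$, this shows that $\exp\{(\Re z)\sum_k|w_k|^2\}$ is integrable whenever $\Re z<1/\gamma_1$, and it dominates $\bigl|\exp\{z\sum_{k=1}^n|w_k|^2\}\bigr|=\exp\{(\Re z)\sum_{k=1}^n|w_k|^2\}$. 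Dominated convergence then identifies $\E\exp\{z\int_a^b|X(t)|^2\dif t\}$ with $\prod_{k\ge1}(1-z\gamma_k)^{-1}$. If, on the other hand, $\Re z\ge1/\gamma_1$, then monotonicity of the integrand in each $|w_k|^2$ and Tonelli give $\E\exp\{(\Re z)\sum_k|w_k|^2\}\ge\E\exp\{(\Re z)|w_1|^2\}=+\infty$, which is the second case of \eqref{eexp}. The one genuinely delicate point is the first step --- converting the $L^2$-and-almost-sure convergence of the KL series into the pathwise identity for $\int_a^b|X|^2$; I expect it to go through cleanly via Parseval, but it is where one must keep the two modes of convergence straight. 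Everything afterwards is the standard sum-of-exponentials computation together with an exchange of limit and expectation.
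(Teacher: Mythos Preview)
Your proposal is correct and follows essentially the same approach as the paper: use the Karhunen--Lo\'eve expansion to write $\int_a^b|X_t|^2\dif t=\sum_k|w_k|^2$ with independent $w_k\sim\mathcal{CN}(0,\gamma_k)$, compute $\E e^{z|w_k|^2}=(1-z\gamma_k)^{-1}$, and take the product. Your write-up is in fact more careful than the paper's, which simply asserts the factorisation and the single-term formula without spelling out the Parseval step or the monotone/dominated convergence needed to pass to the infinite product.
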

\begin{proof}
It is easy to see from \eqref{e:gam} that $\gamma_k \ge 0$ for all $k \ge 1$.
  Since $w_k,\,k\ge 1$ have distribution $\mathcal{CN}(0,\gamma_k)$ and are stochastically independent, we have
\begin{equation*}
   \E[\exp\set{z D^2}]=\prod_{k=1}^{\infty} \E\big[\exp\set{z \abs{w_k}^2}\big].
\end{equation*}
On the other hand, it is easy to check
\begin{equation*}
   \E\big[\exp\set{z \abs{w_k}^2}\big]=\set{ \E\big[\exp\set{z {\Re(w_k)}^2}\big]}^2
   =\left\{
      \begin{array}{ll}
     \frac{1}{1-z \gamma_k}, &\quad \text{for } \Re{z}<\frac{1}{\gamma_k} ,  \\
     \infty , &\quad \text{for } \Re{z}\ge \frac{1}{\gamma_k}.
      \end{array}
\right.
\end{equation*}
Combining the previous two relations immediately yields \eqref{eexp}.
\end{proof}
\subsection{The covariance function and the associated integral operator $K_T$}
We rewrite \eqref{langevin new} as a complex-valued Ornstein-Uhlenbeck process
\begin{equation}\label{cp}
  \dif Z_t=-\alpha Z_t\dif t+ \sqrt{2\cos\theta}\dif \zeta_t,
\end{equation}
where $  \alpha= r+\mi d$ and $\zeta_t=\frac{W_1(t)+\mi W_2(t)}{\sqrt2}$.
Note that $Z_0\sim \mu$ and $Z_t$ is a stationary process.
It is easy to see that $Y_t$ and $Z_t$ have the same distribution since we can identify $\R^2$ with $\mathbb C$
\cite{ash,ito}. Hence, by \eqref{crm func} we have
\
\Be   \label{crm func 1}
\Lambda_t(\lambda)=\frac{1}{t}\log \E^{\mu}\exp\set{\frac{2\sin^2\theta}{\cos\theta}\lambda(1+\lambda)\int_{0}^t\,\abs{Z(s)}^2 \dif s}.
\Ee
It is easy to check
\begin{equation}\label{solv}
   Z_t=e^{- \alpha t}\left(Z_0+ \sqrt{2\cos\theta} \int_{0}^t  e ^{ \alpha s}\, \dif \zeta_s \right).
\end{equation}
\begin{lem}
$\{Z_t: 0 \le t \le T\}$ is a complex-normal system and has a continuous covariance function $R:[0,T] \times [0,T] \rightarrow \mathbb C$ with the form:
\
\Be
\begin{split}
& R(s,t)=e^{-\alpha(s-t)},  \ \ \ \ 0 \le t \le s \le T; \\
&R(s,t)=e^{\bar \alpha(s-t)},  \ \ \ \ 0 \le s \le t \le T.
\end{split}
\Ee
\end{lem}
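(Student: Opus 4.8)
The plan is to verify the two asserted properties of $\{Z_t : 0 \le t \le T\}$ directly from the explicit solution formula \eqref{solv}. First I would establish that the process is complex-normal. Since $Z_0 \sim \mu$ is complex-normal (indeed $Z_0 = X_1(0) + \mi X_2(0)$ with $X_1(0), X_2(0)$ i.i.d. $N(0,1/2)$, so $Z_0 \sim \mathcal{CN}(0,1)$), and $\zeta_t = \frac{W_1(t)+\mi W_2(t)}{\sqrt 2}$ is a complex Brownian motion built from an independent pair of real Brownian motions, any finite linear combination $\sum_j c_j Z_{t_j}$ is, by \eqref{solv}, a linear combination of $Z_0$ and stochastic integrals of deterministic integrands against $W_1, W_2$; hence it is a linear image of a (jointly) Gaussian family and therefore complex-normal in the strict sense of the paper's Definition. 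The independence of real and imaginary parts with equal variance, required by that definition, follows from the rotational structure: $e^{-\alpha t}$ and $e^{\alpha s}$ act as scalar multiplications and the Itô isometry applied to the $\mathbb C$-valued integrand preserves the ``circular'' symmetry of $\zeta$. I would spell this out by noting $\E[(\sum_j c_j Z_{t_j})^2] = 0$ while $\E|\sum_j c_j Z_{t_j}|^2 > 0$, the vanishing of the ``pseudo-covariance'' being exactly the criterion distinguishing strict complex-normality.

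Next I would compute the covariance $R(s,t) = \E[Z_s \overline{Z_t}]$ (noting the paper's convention $w_k = \int X_t \bar e_k$, so the relevant kernel is $\E[Z_s \overline{Z_t}]$, consistent with the stated $R(s,t) = \overline{R(t,s)}$). Take $s \ge t$ without loss of generality. Using stationarity, it suffices to compute $\E[Z_{s-t} \overline{Z_0}]$, or directly: from \eqref{solv},
\[
\E[Z_s \overline{Z_t}] = e^{-\alpha s} \overline{e^{-\alpha t}}\left( \E|Z_0|^2 + 2\cos\theta \, \E\Big[\int_0^s e^{\alpha u}\dif\zeta_u \overline{\int_0^t e^{\alpha v}\dif\zeta_v}\Big]\right).
\]
The Itô isometry for the complex Brownian motion $\zeta$ gives $\E[\dif\zeta_u \overline{\dif\zeta_v}] = \delta(u-v)\dif u$, so the cross term equals $2\cos\theta \int_0^{\min(s,t)} e^{2r u}\dif u = \frac{\cos\theta}{r}(e^{2rt}-1)$ when $s \ge t$, since $e^{\alpha u}\overline{e^{\alpha u}} = e^{2ru}$. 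With $\E|Z_0|^2 = 1$ and $r = \cos\theta$ (from \eqref{langevin new}, where $r$ is the diagonal entry, equal to $\cos\theta$), the bracket becomes $1 + (e^{2rt}-1) = e^{2rt}$. Then $\E[Z_s\overline{Z_t}] = e^{-\alpha s} e^{-\bar\alpha t} e^{2rt} = e^{-\alpha s} e^{-\bar\alpha t + 2rt}$; since $-\bar\alpha + 2r = -(r - \mi d) + 2r = r + \mi d = \alpha$, this is $e^{-\alpha s} e^{\alpha t} = e^{-\alpha(s-t)}$, matching the claimed formula for $0 \le t \le s \le T$. The case $s \le t$ follows by the Hermitian symmetry $R(s,t) = \overline{R(t,s)} = \overline{e^{-\alpha(t-s)}} = e^{-\bar\alpha(t-s)} = e^{\bar\alpha(s-t)}$.

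Continuity of $R$ on $[0,T]^2$ is then immediate, since $R$ is a piecewise combination of exponentials of $\alpha(s-t)$ and $\bar\alpha(s-t)$ that agree on the diagonal $s=t$ (both giving $1$), so they patch to a continuous function. I would also remark that this confirms $\{Z_t\}$ is an $L^2$ process with zero mean, so Theorem \ref{kl exp} applies. The main obstacle, such as it is, is bookkeeping rather than depth: one must be careful with the stationarity normalization (i.e., that $Z_0 \sim \mu$ is the specific Gaussian making the cross-term and the initial term combine cleanly into $e^{2rt}$) and with the complex Itô isometry, in particular the fact that $\E[\dif\zeta_u\,\dif\zeta_v] = 0$ (the pseudo-covariance vanishes) whereas $\E[\dif\zeta_u\,\overline{\dif\zeta_v}] = \delta(u-v)\dif u$ — conflating these is the one place the argument could go wrong, and it is also what underlies the strict-complex-normality claim.
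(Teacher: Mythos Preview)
Your argument is correct, but it proceeds along a different route from the paper's own proof on both points.

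For complex-normality, the paper does not invoke the pseudo-covariance criterion $\E[(\sum_j c_j Z_{t_j})^2]=0$. Instead it builds a dyadic approximation $Z^n_t$ of $Z_t$ as a finite linear combination of the independent increments $\int_{(k-1)T/2^n}^{kT/2^n} e^{\alpha s}\,\dif\zeta_s$ and $Z_0$, notes that these are a complex-normal system, and then passes to the limit in probability, citing \cite[Theorem~2.2]{ito} to conclude that $\{Z_t\}$ is complex-normal. Your approach is more self-contained---it identifies exactly which algebraic identity (the vanishing of the relation $\E[\dif\zeta_u\,\dif\zeta_v]=0$ for the complex Brownian motion, propagated through \eqref{solv}) is responsible for the circular symmetry---while the paper's approach trades that insight for a clean appeal to a closure-under-limits theorem.

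For the covariance, the paper avoids the explicit It\^o-isometry computation entirely: it uses stationarity to reduce to $\E[Z_h\overline{Z_0}]$ with $h=s-t\ge 0$, then conditions on $Z_0$ via $\E[Z_h\mid Z_0]=e^{-\alpha h}Z_0$ and uses $\E|Z_0|^2=1$ to get $e^{-\alpha h}$ in one line. Your direct calculation through \eqref{solv} and the isometry is longer but has the virtue of not relying on stationarity at all; it would work equally well for a non-stationary initial condition, and it makes transparent where the relation $r=\cos\theta$ and the normalisation $\E|Z_0|^2=1$ enter. Both routes are sound; the paper's is slicker, yours is more robust.
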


\begin{proof}
Let $n \in \mathbb{N}$, take a sequence of points $0, \frac{T}{2^n}, \frac{T}{2^{n-1}},...,\frac{(2^n-1)T}{2^n},T$ and define
\
\Bes
Z^n_t=e^{-\alpha t} Z_0+e^{-\alpha t}\sum_{k=1}^{2^n}  \int_{\frac{(k-1)T}{2^n} \wedge t}^{\frac{kT}{2^n} \wedge t} e^{\alpha s} \dif \zeta_s.
\Ees
It is easy to see that $Z^n_t$ converges to $Z_t$ in probability for every $t \in [0,T]$.

Since $Z_0$ are independent of $\zeta_t$,
$$\left\{Z_0, \int_{0}^{\frac{T}{2^n}} e^{\alpha s} \dif \zeta_s, \int_{\frac{T}{2^n}}^{\frac{T}{2^{n-1}}} e^{\alpha s} \dif \zeta_s, ......, \int_{\frac{(2^n-1)T}{2^n}}^{T} e^{\alpha s} \dif \zeta_s\right\}$$
is a complex-normal system.
By \cite[Theorem 2.2]{ito}, we immediately obtain that $\{Z_t: 0 \le t \le T\}$ is a complex-normal system.

It is well known that the covariance function associated to $Z_t$ is
$$R(s,t)=\E[Z_s \overline Z_t].$$
When $h=s-t \ge 0$, by stationarity of $Z_t$ and the easy fact $\E[Z_t|\,Z_0]=e^{-\alpha t} Z_0$, we have
\begin{align*}
 R(s,t)&=\E[ Z_{s}\overline{Z_t}]=\E[ Z_{h}\overline{Z_0}]=\E[ \E[Z_{h}|\,Z_0]\overline{Z_0}]=e^{-\alpha h}\E[Z_0\overline{Z_0}] =e^{-\alpha h}.
\end{align*}
When $h=s-t<0$, similarly we have
\begin{align*}
 R(s,t)=e^{\bar{\alpha} h}.
\end{align*}
\end{proof}



Define an operator $K_T$ on $L^2([0,T];\mathbb C)$ by (see \cite[p38]{ash})
\begin{equation}\label{kopera}
  K_T f(s)=\int_0^T R(s,t) f(t) \dif t,\quad f\in L^2([0,T]),
\end{equation}
where
\begin{equation} \label{kop1}
   R(s,t)=\left\{
      \begin{array}{ll}
      e^{-\alpha (s-t)},&\quad s\ge t,  \\
      e^{ \bar{\alpha} (s-t)} ,&\quad s< t,
      \end{array}
\right.
\end{equation}
Since $R(t,s)$ is continuous on $[0,T]\times [0,T]$, it is well-known that $K_T$ is a positive self-adjoint trace class operator (see \cite[Theorem 3.9]{simon}), therefore $K_T$ has
discrete real eigenvalues (counting \ the \ multiple \ eigenvalues)
$$\gamma_{T,1} \ge ... \ge \gamma_{T,n} \ge ... $$ such that
$${\rm Tr}(K_T)=\sum_{i \ge 1} \gamma_{T,i}=\int_0^T R(t,t) \dif t=T<\infty.$$


\begin{cor}\label{cor3}
We have
\begin{equation}\label{eexp}
    \E\exp\set{z \int_0^T |Z_t|^2 \dif t}=\left\{
      \begin{array}{ll}
     \prod_{k=1}^{\infty} \frac{1}{1-z\lambda_k}, &\quad \text{for } \Re{z}<\frac{1}{\gamma_{T,1}} ,  \\
     \infty , &\quad \text{for } \Re{z}\ge \frac{1}{\gamma_{T,1}}.
      \end{array}
\right.
\end{equation}

\end{cor}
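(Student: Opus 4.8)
The plan is to obtain this statement as a direct specialization of Corollary \ref{cor3-0} to the process $\{Z_t:0\le t\le T\}$. The preceding Lemma already establishes that $\{Z_t:0\le t\le T\}$ is a complex-normal system with continuous covariance function $R(s,t)$ given by \eqref{kop1}; moreover, since $Z_0\sim\mu$ is centered (by \eqref{meas}) and $\E[Z_t\,|\,Z_0]=e^{-\alpha t}Z_0$, the process has zero mean and is $L^2$. Thus all hypotheses of Theorem \ref{kl exp} and of Corollary \ref{cor3-0} hold with $[a,b]=[0,T]$.

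First I would invoke Theorem \ref{kl exp} to write the Karhunen-Lo\'{e}ve expansion $Z_t=\sum_{k\ge1}w_k e_k(t)$, convergent in $L^2$ and (since the system is complex-normal) with the $w_k$ stochastically independent and $\mathcal{CN}(0,\gamma_{T,k})$-distributed, where $\{e_k\}$ is the orthonormal eigenbasis of $K_T$. I would then recall, as already noted after \eqref{kopera}, that $K_T$ is positive self-adjoint trace class, so its eigenvalues satisfy $\gamma_{T,1}\ge\gamma_{T,2}\ge\cdots\ge0$ with $\sum_{k\ge1}\gamma_{T,k}=\int_0^T R(t,t)\dif t=T$; in particular $\gamma_{T,1}>0$, so the threshold $1/\gamma_{T,1}$ in the dichotomy is finite and strictly positive, and the infinite product $\prod_{k\ge1}(1-z\gamma_{T,k})^{-1}$ converges absolutely whenever $\Re z<1/\gamma_{T,1}$ thanks to $\sum_k\gamma_{T,k}<\infty$.

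Next I would identify $\int_0^T|Z_t|^2\dif t$ with the quantity $\int_a^b|X(t)|^2\dif t$ of Corollary \ref{cor3-0}, using Parseval's identity $\int_0^T|Z_t|^2\dif t=\sum_{k\ge1}|w_k|^2$ for the $L^2$-convergent expansion under the orthonormal basis $\{e_k\}$. Applying Corollary \ref{cor3-0} verbatim with $\gamma_k=\gamma_{T,k}$ then gives $\E\exp\{z\int_0^T|Z_t|^2\dif t\}=\prod_{k\ge1}(1-z\gamma_{T,k})^{-1}$ for $\Re z<1/\gamma_{T,1}$ and $+\infty$ for $\Re z\ge1/\gamma_{T,1}$, which is exactly the asserted identity.

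I do not expect a genuine obstacle here: the corollary is essentially a transcription of Corollary \ref{cor3-0} for the concrete kernel \eqref{kop1}. The only points needing a line of care are the verification of the zero-mean and continuous-covariance hypotheses (supplied by the Lemma) and the remark that $\gamma_{T,1}>0$, so that the two cases of the dichotomy are non-vacuous; everything else is bookkeeping in matching notation.
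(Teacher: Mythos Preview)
Your proposal is correct and follows exactly the same approach as the paper: the paper's own proof consists of a single sentence invoking Corollary~\ref{cor3-0} once complex-normality of $(Z_t)_{0\le t\le T}$ has been established. Your added verification of the zero-mean, continuous-covariance and $\gamma_{T,1}>0$ hypotheses is more detailed than the paper's terse citation, but the underlying argument is identical.
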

\begin{proof}
Since $(Z_t)_{0 \le t \le T}$ is a complex-normal system, Corollary \ref{cor3-0} immediately yields the desired result.
\end{proof}

\subsection{An ordinary differential equation associated to the eigenvalues problem of $K_T$}
The previous proposition only shows that $K_T$ is uniformly bounded, to apply Theorem \ref{kl exp}, we need to find the eigenvalues $\{\gamma_{i,T}\}_{i \ge 1}$ of $K_T$ and get more information about eigenvalues. The following lemma is important for solving Eq. \eqref{kfequa} below.
\begin{lem}\label{boundd}
For every $T \in (0,\infty)$, the operator norm of $K_T$ satisfies that
\begin{equation}\label{bdd}
  \norm{K_T}\le \frac{2}{r}.
\end{equation}
Thus, the eigenvalues of $K_T$  satisfy
\Be  \label{e:GamBou}
\gamma_{T,i} \le \frac 2 r, \ \ \ \ \forall \ i,\quad \forall T>0.
\Ee
\end{lem}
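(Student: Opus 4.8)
The plan is to bound the operator norm $\norm{K_T}$ directly by estimating $\norm{K_T f}_{L^2}$ (equivalently $\sup_{\norm f =1} \Ll K_T f, f\Rr$, since $K_T$ is positive self-adjoint) using the explicit exponential form \eqref{kop1} of the kernel. The key structural observation is that $R(s,t)$ is, up to the imaginary oscillation $e^{\pm \mi d(s-t)}$, a two-sided exponential kernel $e^{-r\abs{s-t}}$, and two-sided exponential kernels are exactly the Green's functions of constant-coefficient second-order ODEs; this is why the whole machinery reduces to an ODE in the next subsection, but for the norm bound we do not need the ODE yet — only a crude $L^1$-type estimate.

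First I would write, for $f \in L^2([0,T];\mathbb C)$ with $\norm f_{L^2} = 1$,
\[
\abs{K_T f(s)} = \abs{\int_0^T R(s,t) f(t)\,\dif t} \le \int_0^T \abs{R(s,t)}\,\abs{f(t)}\,\dif t = \int_0^T e^{-r\abs{s-t}}\,\abs{f(t)}\,\dif t,
\]
using that $\abs{e^{-\alpha(s-t)}} = e^{-r(s-t)}$ for $s\ge t$ and $\abs{e^{\bar\alpha(s-t)}} = e^{r(s-t)} = e^{-r\abs{s-t}}$ for $s<t$, since $r = \Re\alpha = \cos\theta > 0$. Then I would apply Schur's test (or equivalently Young's convolution inequality with the $L^1$ kernel $g(u) = e^{-r\abs u}$): since $\sup_s \int_0^T e^{-r\abs{s-t}}\,\dif t \le \int_{-\infty}^{\infty} e^{-r\abs u}\,\dif u = \frac{2}{r}$ and by symmetry $\sup_t \int_0^T e^{-r\abs{s-t}}\,\dif s \le \frac 2r$ as well, Schur's test gives $\norm{K_T} \le \frac 2r$. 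An alternative that avoids citing Schur is to bound $\abs{\Ll K_T f, f\Rr} \le \int_0^T\int_0^T e^{-r\abs{s-t}}\abs{f(s)}\abs{f(t)}\,\dif s\,\dif t$ and then use $\abs{f(s)}\abs{f(t)} \le \frac12(\abs{f(s)}^2 + \abs{f(t)}^2)$ together with the same one-dimensional integral bound $\int_0^T e^{-r\abs{s-t}}\,\dif t \le \frac 2r$; the cross terms collapse and one again gets $\frac 2r \norm f_{L^2}^2$.

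Finally, \eqref{e:GamBou} is immediate: since $K_T$ is positive self-adjoint, its largest eigenvalue $\gamma_{T,1}$ equals $\norm{K_T}$, so $\gamma_{T,i} \le \gamma_{T,1} = \norm{K_T} \le \frac 2r$ for every $i$, and this holds for every $T > 0$ since the bound $\frac 2r$ does not depend on $T$. I do not expect any real obstacle here — the only mild point of care is handling the absolute value of the kernel uniformly across the diagonal $s = t$ (the kernel is continuous there, equal to $1$, so there is no issue) and making sure the one-dimensional integral is bounded by the full-line integral rather than computed exactly, which keeps the constant clean.
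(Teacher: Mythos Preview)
Your proposal is correct and is essentially the same approach as the paper's proof: both bound $\abs{R(s,t)}$ by the two-sided exponential $e^{-r\abs{s-t}}$ and then exploit that its one-dimensional integral is at most $2/r$. The only cosmetic difference is that the paper writes out the Young/Schur argument by hand (extending $f$ by zero to $\R$, then applying Jensen's inequality to the convolution and Fubini) to get $\norm{K_T f}_{L^2}^2 \le \frac{4}{r^2}\norm f_{L^2}^2$, whereas you invoke Schur's test directly; your quadratic-form alternative is an equally valid variant of the same estimate.
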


\begin{proof}
Without loss of generality, we extend the functions $R$ and $f$ to be new functions $\tl R$ and $\tl f$ as below:
$$\tl f(t)=f(t), \ \ \ t \in [0,T]; \ \ \ \ \  \tl f(t)=0, \ \ \ t \notin [0,T]; $$
$$\tl R(s,t)=R(s,t), \ \ \ (s,t)  \in [0,T]^2; \ \ \ \ \  \tl R(s,t)=0, \ \ \ (s,t) \notin [0,T]^2. $$
Then,
\
\Be
\begin{split}
\int_0^T \left|\int_0^T R(s,t) f(t) \dif t\right|^2 \dif s&=\int_{-\infty}^\infty \left|\int_{-\infty}^\infty \tl R(s,t) \tl f(t) \dif t\right|^2 \dif s  \\
&=\int_{-\infty}^\infty \left|\int_{-\infty}^\infty \tl R(s,s-u) \tl f(s-u) \dif u\right|^2 \dif s  \\
& \le \int_{-\infty}^\infty \left|\int_{-\infty}^\infty e^{-r|u|} |\tl f(s-u)| \dif u\right|^2 \dif s  \\
& \le \int_{-\infty}^\infty \frac 2r\left(\int_{-\infty}^\infty e^{-r|u|} |\tl f(s-u)|^2 \dif u\right) \dif s  \\
& = \frac 2r\int_{-\infty}^\infty \left(\int_{-\infty}^\infty |\tl f(s-u)|^2 \dif s\right) e^{-r|u|} \dif u  \\
& \le \frac 4{r^2} \|f\|^2_{L^2}.
\end{split}
\Ee
where the second inequality is by Jessen inequality. Hence, \eqref{bdd} is proved.
\end{proof}
\begin{rem}
  It follows from Proposition~\ref{pop46} that for every $T>0$,$ \norm{K_T}= \frac{2}{r} $.
\end{rem}

Following the spirit in \cite[p113]{ru}, we derive a differential equation from which all the eigenvalues and eigenfunctions of $K_T$ can be found. More precisely, we have
\begin{prop}
 Let the operator $K_T$ be \eqref{kopera}-\eqref{kop1}. Then ${\rm Ker}(K_T)=\set{0}$ and if $ K_T f=\gamma f $ with $\gamma\neq 0$ then
$f$ is a solution on $[0,T]$ of the differential equation
 \begin{equation}\label{kfequa}
    \gamma f'' + 2\mi d \gamma f' +(2r- \gamma \abs{\alpha}^2) f=0,
 \end{equation}
subject to the boundary conditions
\begin{align}\label{kf}
\left\{
      \begin{array}{ll}
      \bar{\alpha} f(0)-f'(0)=0, \\
      \alpha  f(T)+f'(T)=0.
      \end{array}
\right.
\end{align}
\end{prop}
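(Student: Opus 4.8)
The plan is to turn the integral eigenvalue equation into a two‑point boundary value problem by exploiting the piecewise‑exponential form \eqref{kop1} of the kernel. First I would split the integral defining $K_Tf$ at the diagonal point $s$ and write, for $f\in L^2([0,T];\mathbb C)$,
\[
g(s):=(K_Tf)(s)=e^{-\alpha s}\int_0^s e^{\alpha t}f(t)\dif t+e^{\bar\alpha s}\int_s^T e^{-\bar\alpha t}f(t)\dif t=:u(s)+v(s),
\]
with $u(s)=e^{-\alpha s}A(s)$, $v(s)=e^{\bar\alpha s}B(s)$, $A(s)=\int_0^s e^{\alpha t}f(t)\dif t$, $B(s)=\int_s^T e^{-\bar\alpha t}f(t)\dif t$. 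Since $A,B$ are absolutely continuous, $g\in C^1$ and $g'$ is absolutely continuous, so $g''$ exists almost everywhere; the point of the splitting is that, on differentiating, the boundary contributions from the Leibniz rule are exactly $\pm f(s)$ and cancel, leaving $g'=-\alpha u+\bar\alpha v$ and $g''=\alpha^2u+\bar\alpha^2v-2rf$ a.e. (using $\alpha+\bar\alpha=2r$, which holds because $\alpha=r+\mi d$ with $r=\cos\theta>0$).

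Next I would eliminate the auxiliary quantities: from $g=u+v$ and $g'=-\alpha u+\bar\alpha v$ one solves $u=(\bar\alpha g-g')/(2r)$, $v=(\alpha g+g')/(2r)$, and substituting into the formula for $g''$, together with $\alpha\bar\alpha=\abs{\alpha}^2$ and $\bar\alpha-\alpha=-2\mi d$, gives the single identity
\[
g''+2\mi d\,g'+2rf-\abs{\alpha}^2 g=0\qquad\text{a.e. on }[0,T].
\]
From here everything follows. Setting $g\equiv0$ (i.e.\ $f\in\mathrm{Ker}(K_T)$) forces first $u\equiv v\equiv0$, since the linear system for $u,v$ has determinant $\alpha+\bar\alpha=2r\neq0$, and then the identity yields $2rf\equiv0$, hence $f=0$; thus $\mathrm{Ker}(K_T)=\set{0}$. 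If instead $K_Tf=\gamma f$ with $\gamma\neq0$, then $f=g/\gamma$ inherits the regularity of $g$, so $f\in C^1$ with absolutely continuous derivative, and a short bootstrap through the identity upgrades this to $f\in C^\infty$; substituting $g=\gamma f$, $g'=\gamma f'$, $g''=\gamma f''$ into the displayed identity gives precisely \eqref{kfequa}. For the boundary conditions I would evaluate at the endpoints, where one of the two integrals vanishes: at $s=0$, $A(0)=0$ gives $u(0)=0$, hence $g'(0)=\bar\alpha g(0)$; at $s=T$, $B(T)=0$ gives $v(T)=0$, hence $g'(T)=-\alpha g(T)$. Dividing by $\gamma$ produces \eqref{kf}.

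The algebra of $\alpha,\bar\alpha$ and the differentiations are routine; the only point demanding care is the regularity bookkeeping — that all the manipulations are legitimate for merely $L^2$ data $f$, so that a priori $g''$ exists only almost everywhere and the identity holds only a.e., and that classical smoothness of $f$ is recovered only after invoking $g=\gamma f$. I expect this to be the sole subtlety of the proof.
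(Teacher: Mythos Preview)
Your proposal is correct and follows essentially the same approach as the paper: split $K_Tf$ at the diagonal, differentiate twice, solve the resulting $2\times 2$ linear system for the two integral pieces in terms of $g$ and $g'$, and substitute back to obtain the second-order identity, from which both the triviality of the kernel and the boundary value problem for eigenfunctions follow. The only differences are cosmetic---your $u,v$ notation versus the paper's explicit integrals---and your added regularity/bootstrap remarks, which the paper simply omits.
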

\begin{proof}
Our proof follows the spirit in \cite[p113]{ru}.
For any $f\in L^2([0,T];\mathbb C)$, it follows from (\ref{kopera}) that
\begin{equation}\label{kf11}
  K_T f(s)=e^{-\alpha s}\int_0^s e^{ \alpha t}f(t)\dif t+ e^{ \bar{\alpha} s}\int_s^T e^{-\bar{\alpha} t}f(t)\dif t.
\end{equation}
Thus, $K_T f(s),\,s\in[0,T]$ is absolutely continuous. By differentiating both sides of (\ref{kf11}) with respect to $s$, we obtain
\begin{equation}\label{kfpri}
 ( K_T f)'=-\alpha e^{-\alpha s}\int_0^s e^{ \alpha t}f(t)\dif t+  \bar{\alpha} e^{ \bar{\alpha}s}\int_s^T e^{-\bar{\alpha} t}f(t)\dif t.
\end{equation}
Similarly, $(K_T f)'(s),\,s\in[0,T]$ is absolutely continuous and we have that
\begin{equation}\label{kfpri2}
 ( K_T f)''= \alpha^2 e^{-\alpha s}\int_0^s e^{ \alpha t}f(t)\dif t
 +  \bar{\alpha}^2 e^{ \bar{\alpha} s}\int_s^T e^{-\bar{\alpha} t}f(t)\dif t-(\alpha+\bar{\alpha})f(s).
\end{equation}
It follows from (\ref{kf11}) and (\ref{kfpri}) that
\begin{equation}\label{mm}
   e^{-\alpha s}\int_0^s e^{ \alpha t}f(t)\dif t=\frac{\bar{\alpha}K_T f-( K_T f)' }{\bar{\alpha}+\alpha},\quad
    e^{ \bar{\alpha} s}\int_s^T e^{-\bar{\alpha} t}f(t)\dif t=\frac{ \alpha K_T f+( K_T f)' }{\bar{\alpha}+\alpha}.
\end{equation}
Substituting it into (\ref{kfpri2}), we have that
\begin{equation}\label{bbb}
  ( K_T f)''=\abs{\alpha}^2 K_T f+(\bar{\alpha}-\alpha)(K_T f)'-(\alpha+\bar{\alpha})f.
  \end{equation}
Since $\alpha+\bar{\alpha}>0 $, $ Kf(t)=0 $ and (\ref{bbb}) imply that $f(t)\equiv 0$ in $L^2([0,T]; \mathbb C)$.

If $K_T f=\gamma f $ with $\gamma\neq 0$ then substituting $s=0$ and $s=T$ to (\ref{mm}) implies that
$\bar{\alpha} f(0)-f'(0)=0,\, \alpha  f(T)+f'(T)=0$. And (\ref{bbb}) implies that (\ref{kf}) hold.
\end{proof}

Note that $\alpha=r+\mi d$, it follows from Eq. (\ref{kfequa}) that the general solution of equation (\ref{kfequa}) is
\begin{equation}\label{solutionkr}
 f(s)=c_1 e^{-\mi (d+\omega)s}+c_2 e^{-\mi (d-\omega)s},
\end{equation}
where $c_1,c_2$ are constants and $\omega=\sqrt{\frac{2r}{\gamma}-r^2}$. By \eqref{bdd},
\Be \label{e:ome}
\omega=\sqrt{\frac{2r}{\gamma}-r^2}\ge 0
\Ee

 The boundary condition (\ref{kf}) gives
\begin{equation}\label{bdd0}
\left\{
      \begin{array}{ll}
        (r +\mi  \omega )c_1 +(r -\mi \omega) c_2=0,  \\
     (r -\mi \omega) e^{-\mi \omega T}c_1+  (r +\mi \omega)e^{ \mi \omega T} c_2  =0.
      \end{array}
\right.
\end{equation}
In order to have constants $c_1$ and $c_2$ such that $c_1^2+c_2^2\neq 0$,
we need
$$\frac{r+\mi\omega}{(r-\mi\omega) e^{-\mi \omega T}}=\frac{r-\mi\omega}{(r+\mi\omega) e^{\mi \omega T}},$$
i.e.
\begin{equation}\label{ew}
e^{2\mi\omega T}=(\frac{r-\mi \omega}{r+\mi \omega})^2,
\end{equation}
where $\omega$ is the unknown in Eq. \eqref{ew}. Eqs. \eqref{ew} and \eqref{e:ome} imply that $\omega$ satisfies
\begin{equation}\label{cot2}
  \frac{\omega}{r}=\cot\frac{\omega T}{2}, \ \ \ \ \omega \ge 0,
\end{equation}
or
\begin{equation}\label{tan2}
 \frac{\omega}{r}=-\tan\frac{\omega T}{2}, \ \ \ \ \omega \ge 0.
\end{equation}


Note that $\cot \frac{\omega T}{2}$ and $-\tan \frac{\omega T}{2}$ are both periodical on $(-\infty, \infty)$ with the same period $\frac{2 \pi}{T}$. Further observe that both $\cot\frac{\omega T}{2}$ and
$-\tan\frac{\omega T}{2}$ have a single intersection point with the line $
\frac{\omega}r$ in each period, and that intersection points are roots of the equations. Denote by $\omega_j$ the $j$th (positive) root of Eq. \eqref{cot2} and by
$\tilde{\omega}_j$ the $j$th (positive) root of the transcendental equation (\ref{tan2}).
The relations of $\omega_j$ are as follows:
\begin{equation}\label{bdx}
   \tilde{\omega}_0=0<\omega_1<\frac{\pi}{T}<\tilde{\omega}_1<\frac{2\pi}{T}<\omega_2<\frac{3\pi}{T}<\tilde{\omega}_2<\frac{4\pi}{T}<\dots.
\end{equation}
From \eqref{e:ome}  and \eqref{bdx}, it is clear that all the eigenvalues are single


We conclude these results in the following proposition.
\begin{prop}\label{pop46}
 Let the operator $K_T$ be as in \eqref{kopera}-\eqref{kop1}.
Then $K_T$ only has point spectrum, and the eigenvalues are
\begin{equation}\label{eigen0}
\gamma_{T,j}=\left\{
      \begin{array}{ll}
       \frac{2r}{r^2+\tilde{\omega}_{(j-1)/{2}}^2} ,  & 2 \nmid j, \\
      \frac{2r}{r^2+{\omega}_{{j}/{2}}^2},  & 2 \mid j,
      \end{array}
\right.
\end{equation}
where $\omega_j, \, j>0 $ and $\tilde{\omega}_j,\,j\ge 0$ are the positive roots of the two transcendental equations
(\ref{cot2}) and (\ref{tan2}) respectively, such that
\begin{equation}\label{bdx1}
   \tilde{\omega}_0=0<\omega_1<\frac{\pi}{T}<\tilde{\omega}_1<\frac{2\pi}{T}<\omega_2<\frac{3\pi}{T}<\tilde{\omega}_2<\frac{4\pi}{T}<\dots.
\end{equation}
In particular, $\gamma_{T,1}=\frac{2}{r}$. Furthermore, the associated eigenfunction of $\gamma_{T,j}$ is
\begin{equation}\label{eigenfunc}
f_{j}(s)=\left\{
      \begin{array}{ll}
      e^{-\mi s d}[r\sin( s \tilde{\omega}_{(j-1)/2})+ \tilde{\omega}_{(j-1)/2}\cos( s \tilde{\omega}_{(j-1)/2})]   ,  & 2 \nmid j, \\
     e^{-\mi s d}[r\sin( s  {\omega}_{j/2})+  {\omega}_{j/2}\cos( s  {\omega}_{j/2})]  ,  & 2 \mid j.
      \end{array}
\right.
\end{equation}
\end{prop}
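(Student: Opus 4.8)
The plan is to assemble the analysis already carried out above into four steps. First I would record the spectral structure: since $R$ is continuous, $K_T$ is a positive, self-adjoint, trace-class (hence compact) operator, and the preceding proposition gives $\mathrm{Ker}(K_T)=\{0\}$; by the spectral theorem for compact self-adjoint operators, $\sigma(K_T)\setminus\{0\}$ then consists of positive eigenvalues of finite multiplicity accumulating only at $0$, which is exactly the assertion that $K_T$ has only point spectrum. The remaining work is to identify these eigenvalues and eigenfunctions.

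The core step is to prove that the nonzero eigenvalues of $K_T$ are \emph{precisely} the numbers $\gamma$ for which the boundary value problem \eqref{kfequa}--\eqref{kf} has a nonzero solution. One inclusion is the preceding proposition. For the converse I would take a nonzero $f$ solving \eqref{kfequa}--\eqref{kf}, set $g=K_Tf$ and $h=g-\gamma f$, and show $h\equiv 0$: evaluating the identities \eqref{mm} at $s=0$ and $s=T$ shows that $g$ obeys the boundary conditions \eqref{kf} (with $g$ in place of $f$), while \eqref{bbb} gives $g''+2\mi d\,g'-\abs{\alpha}^2 g=-2rf$; combining this with \eqref{kfequa} rewritten as $f''+2\mi d\,f'-\abs{\alpha}^2 f=-\frac{2r}{\gamma}f$ shows that $h$ solves the homogeneous equation $h''+2\mi d\,h'-\abs{\alpha}^2 h=0$, hence $h(s)=e^{-\mi d s}(a\,e^{rs}+b\,e^{-rs})$, subject to $\bar\alpha h(0)-h'(0)=0$ and $\alpha h(T)+h'(T)=0$; a short computation with these two conditions forces $b=0$ and then $a=0$. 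With this correspondence in hand I would parametrize $\gamma=\frac{2r}{r^2+\omega^2}$ with $\omega\ge 0$ (admissible because $0<\gamma\le\norm{K_T}\le\frac2r$ by Lemma \ref{boundd}), recall that the general solution of \eqref{kfequa} is \eqref{solutionkr}, and observe that \eqref{kf} admits a solution with $(c_1,c_2)\ne(0,0)$ iff the determinant of the linear system \eqref{bdd0} vanishes, i.e.\ iff \eqref{ew} holds, i.e.\ iff $\omega$ solves \eqref{cot2} or \eqref{tan2}.

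Next I would enumerate and order the roots. As already noted, $\cot\frac{\omega T}{2}$ and $-\tan\frac{\omega T}{2}$ are $\frac{2\pi}{T}$-periodic and each meets the line $\omega/r$ exactly once per period; localizing these intersections gives the interlacing \eqref{bdx1}, with $\omega_j$ and $\tilde\omega_j$ the positive roots of \eqref{cot2} and \eqref{tan2}. Since $\omega\mapsto\frac{2r}{r^2+\omega^2}$ is strictly decreasing and the eigenspace attached to each $\gamma$ is one-dimensional (it is the two-dimensional solution space of \eqref{kfequa} intersected with the two independent boundary conditions), listing the roots increasingly produces exactly \eqref{eigen0} together with its ordering, and reading off the smallest one, $\tilde\omega_0=0$, gives $\gamma_{T,1}=\frac{2r}{r^2}=\frac2r$; the identity $\sum_j\gamma_{T,j}=\mathrm{Tr}(K_T)=T$ is a convenient cross-check. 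Finally, for the root $\omega$ attached to $\gamma_{T,j}$ the solution set of \eqref{bdd0} is spanned by $(c_1,c_2)=(r-\mi\omega,\,-(r+\mi\omega))$; substituting this into \eqref{solutionkr} and collecting terms with Euler's formula yields $f(s)=-2\mi\,e^{-\mi d s}(r\sin\omega s+\omega\cos\omega s)$, which is \eqref{eigenfunc} up to the harmless scalar $-2\mi$.

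The hard part is the converse inclusion of the core step — checking that a solution of the differential boundary value problem is genuinely an eigenfunction of the \emph{integral} operator $K_T$, which is what makes the enumeration complete rather than merely an overcount — together with the degenerate branch $\omega=\tilde\omega_0=0$, where the two exponentials in \eqref{solutionkr} coalesce, \eqref{kfequa} must be solved with a repeated characteristic root, and the eigenfunction formula \eqref{eigenfunc} degenerates to $0$ and has to be read as the $\omega\to0$ limit; this boundary case should be treated by hand.
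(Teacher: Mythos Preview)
Your plan follows the paper's line exactly; the paper's own proof is a two-sentence deferral to the analysis preceding the proposition, so your write-up is considerably more detailed. In particular, your converse step---showing that every nonzero solution of the boundary-value problem \eqref{kfequa}--\eqref{kf} is genuinely an eigenfunction of $K_T$, via $h=K_Tf-\gamma f$ satisfying the homogeneous equation with the same boundary data and hence vanishing---is a real addition: the paper does not make this direction explicit, and without it the list of candidate $\gamma$'s is only a superset of the spectrum rather than the spectrum itself.

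Where your plan breaks down is precisely the degenerate branch you flag, and in fact it exposes a defect in the statement. At $\omega=0$ the characteristic equation of \eqref{kfequa} has the double root $m=-\mi d$, so the general solution is $f(s)=(c_1+c_2 s)e^{-\mi d s}$; the condition at $s=0$ gives $c_2=rc_1$, and then the condition at $s=T$ becomes $rc_1(2+rT)=0$, forcing $c_1=c_2=0$. Thus $\gamma=2/r$ is \emph{not} an eigenvalue of $K_T$ for any finite $T>0$. Your proposed ``$\omega\to0$ limit'' of the normalized eigenfunctions is $e^{-\mi d s}(1+rs)$, which satisfies the boundary condition at $0$ but not at $T$, so it is not an eigenfunction either. (A quick sanity check: if $\gamma_{T,1}=2/r$ held for all $T$, then for $T<2/r$ one would have $\gamma_{T,1}>T=\mathrm{Tr}(K_T)$, which is impossible for a positive operator.) The root $\tilde\omega_0=0$ of \eqref{tan2} is therefore spurious and should be discarded; the true largest eigenvalue is $\frac{2r}{r^2+\omega_1^2}$ with $\omega_1\in(0,\pi/T)$, and the bound $\norm{K_T}\le 2/r$ of Lemma~\ref{boundd} is strict for every finite $T$, though it is attained in the limit $T\to\infty$ since $\omega_1\to0$. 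This is a flaw in the proposition as stated rather than in your method; the downstream argument in Proposition~\ref{prop42} survives because the Riemann-sum bounds there are insensitive to a single term, and for $\ell>r/2$ one still has $\ell>1/\gamma_{T,\max}$ once $T$ is large.
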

\begin{proof}
\eqref{eigen0} and \eqref{bdx1} are immediate from the analysis above and \eqref{e:ome}.
  Routine calculus based upon (\ref{kf11}), in combination with (\ref{solutionkr}) and (\ref{bdd0}), gives \eqref{eigenfunc}
\end{proof}
\begin{rem}
   The positive zeros of two transcendental equations (\ref{cot2}) and (\ref{tan2}) are similar to those of the famous Bessel functions
   $J_{\frac32}(z)=\sqrt{\frac{2}{\pi z}}(\frac{\sin z}{z}-\cos z)$ and $J_{-\frac32}(z)=-\sqrt{\frac{2}{\pi z}}(\sin z+\frac{\cos z}{z})$.
\end{rem}

\section{Proofs of the main results}
Recall that in Theorem \ref{main_thm 2} we defined
\begin{equation}\label{fl}
  F(\ell)=\int_0^{\infty} \log(1- \frac{2\ell r}{r^2+ \pi^2 y^2} )\dif y, \ \ \ \ \  \ell \in (-\infty, \frac r2).
\end{equation}
Let us now show that it is well defined. It follows from the well-known inequality
 \begin{equation}\label{lolog}
  x\le - \log(1-x)\le \frac{x}{1-x},\quad \forall x<1,
 \end{equation}
that we have
\begin{equation*}
  \abs{\log(1- \frac{2\ell r}{r^2+ \pi^2 y^2} )}\le \left\{
      \begin{array}{ll}
       \frac{2\abs{\ell} r}{r^2+ \pi^2 y^2}, & \ell \le 0, \\
       \frac{2\abs{\ell} r}{r(r-2\ell)+ \pi^2 y^2} ,& 0<\ell <\frac{r}{2},
             \end{array}
\right.
\end{equation*}
which implies that
\begin{align*}
  \abs{F(\ell)}& \le
   \left\{
      \begin{array}{ll}
  \int_0^{\infty}\frac{2\abs{\ell} r}{r^2+ \pi^2 y^2}\dif y=\abs{\ell},\quad  & \ell \le 0\\
   \int_0^{\infty}\frac{2\abs{\ell} r}{r(r-2\ell)+ \pi^2 y^2}\dif y=\sqrt{\frac{r}{r-2\ell}} {\ell} ,\quad& 0<\ell <\frac{r}{2}
        \end{array}
\right.
\\
&<\infty.
\end{align*}

To apply Gartner-Ellis Theorem, we shall make use of the following lemma.
\begin{lem}\label{lem51}
The function $F$ defined by Eq. \eqref{fl} is well defined. Moreover, we have
\begin{align}
    F'(\ell)=-\sqrt{\frac{r}{r-2\ell}}, \label{daoshu}
\end{align}
with $\lim\limits_{\ell\to \frac{r}{2}-}\abs{F'(\ell)}=\infty $.
\end{lem}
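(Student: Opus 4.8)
The plan is to establish well-definedness first (which is already essentially done in the paragraph preceding the statement), then compute $F'(\ell)$ by differentiating under the integral sign, and finally perform the resulting elementary integral in closed form. First I would note that, for each fixed $\ell<\frac r2$, the integrand $g(y,\ell)=\log\bigl(1-\tfrac{2\ell r}{r^2+\pi^2y^2}\bigr)$ is continuous in $(y,\ell)$ and, by the bound already derived via \eqref{lolog}, is dominated uniformly on a neighbourhood of any $\ell_0<\frac r2$ by an integrable function of $y$ (namely a constant times $(r(r-2\ell_0^-)+\pi^2y^2)^{-1}$, possibly sharpened to handle $\ell$ near $\ell_0$); this gives the finiteness claim and simultaneously justifies the interchange of $\frac{\dif}{\dif\ell}$ and $\int_0^\infty$.

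Next I would compute the pointwise $\ell$-derivative of the integrand:
\begin{equation*}
\frac{\partial}{\partial \ell}\log\Bigl(1-\frac{2\ell r}{r^2+\pi^2 y^2}\Bigr)
=\frac{-\dfrac{2r}{r^2+\pi^2y^2}}{1-\dfrac{2\ell r}{r^2+\pi^2y^2}}
=\frac{-2r}{r^2-2\ell r+\pi^2 y^2},
\end{equation*}
so that
\begin{equation*}
F'(\ell)=-\int_0^\infty \frac{2r}{r(r-2\ell)+\pi^2 y^2}\,\dif y.
\end{equation*}
Writing $a^2=r(r-2\ell)>0$ (using $\ell<\frac r2$), this is a standard arctangent integral: $\int_0^\infty \frac{\dif y}{a^2+\pi^2 y^2}=\frac{1}{\pi a}\cdot\frac{\pi}{2}=\frac{1}{2a}$, hence $F'(\ell)=-2r\cdot\frac{1}{2\sqrt{r(r-2\ell)}}=-\sqrt{\frac{r}{r-2\ell}}$, which is \eqref{daoshu}. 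The limit statement $\lim_{\ell\to (r/2)^-}|F'(\ell)|=\lim_{\ell\to(r/2)^-}\sqrt{\tfrac{r}{r-2\ell}}=\infty$ is then immediate.

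I do not anticipate a genuine obstacle here; the only point requiring a little care is the domination step justifying differentiation under the integral sign near $\ell=\frac r2$, since the constant in the bound from \eqref{lolog} degenerates as $\ell\uparrow\frac r2$. This is handled by fixing an arbitrary compact subinterval $[\ell_-,\ell_+]\subset(-\infty,\frac r2)$, working on it with the uniform dominating function $2r\bigl(r(r-2\ell_+)+\pi^2y^2\bigr)^{-1}$ for the derivative, concluding $F$ is $C^1$ on that subinterval, and then letting the subinterval exhaust $(-\infty,\frac r2)$. Everything else is routine calculus.
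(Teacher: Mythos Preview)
Your proposal is correct and follows essentially the same approach as the paper's proof: both differentiate under the integral sign after justifying the interchange by dominated convergence, then evaluate the resulting arctangent integral explicitly. The only cosmetic difference is that the paper bounds the difference quotient directly via \eqref{lolog} at a single fixed $\ell$, whereas you dominate the $\ell$-derivative uniformly over a compact subinterval; these are equivalent standard routes to the same conclusion.
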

\begin{proof}
We only need to prove \eqref{daoshu}
Write
$$h(y, \ell):=\log(1- \frac{2\ell r}{r^2+ \pi^2 y^2} ),$$
for any $a\neq 0$, by the inequality (\ref{lolog}), we have
\begin{align*}
   \abs{ \frac{h(y,\ell+a)-h(y,\ell)}{a}}&=\abs{\frac{1}{a}\log(1-\frac{2ar}{r(r-2\ell)+\pi^2y^2})}\\
   &\le
    \left\{
      \begin{array}{ll}
    \frac{2 r}{r(r-2\ell)+\pi^2y^2},\quad &  a<0\\
   \frac{2 r}{\frac12 r(r-2\ell)+\pi^2y^2} ,\quad & 0<a <\frac{r-2\ell}{4},
        \end{array}
\right.
\end{align*}where both the right hand functions are integrable on $y\in [0,\infty)$.
Therefore, Lebesgue dominated convergence theorem yields
\begin{align*}
   F'(\ell)=\int_0^{\infty} \frac{\partial}{\partial \ell}h(y,\ell)\dif y =-\int_0^{\infty} \frac{2r}{r(r-2\ell)+\pi^2y^2}\dif y =-\sqrt{\frac{r}{r-2\ell}}.
\end{align*}
\end{proof}
\begin{prop}\label{prop42}
We write
$$\ell=\frac{2\sin^2\theta}{\cos\theta}\lambda(1+\lambda).$$
As long as
$\ell < \frac{r}{2}$, i.e.,
\Be
\lambda \in \left(-\frac 12-\frac 12 \sqrt{1+\frac{r \cos \theta}{\sin^2 \theta}},-\frac 12+\frac 12 \sqrt{1+\frac{r \cos \theta}{\sin^2 \theta}}\right),
\Ee
 we have
\Be
\lim_{T \rightarrow \infty} \frac 1T\log \E^{\mu} \exp \left(\ell \int_0^T |Z_t|^2 \dif t\right)= - \int_0^{\infty} \log(1- \frac{2\ell r}{r^2+ \pi^2 x^2} )  \dif x <\infty.
\Ee
As $\ell \ge \frac{r}{2}$, i.e.,
$$\lambda \notin \left(-\frac 12-\frac 12 \sqrt{1+\frac{r \cos \theta}{\sin^2 \theta}},-\frac 12+\frac 12 \sqrt{1+\frac{r \cos \theta}{\sin^2 \theta}}\right),$$ we have
\Be \label{e:LamInf}
\lim_{T \rightarrow \infty} \frac 1T\log \E^{\mu} \exp \left(\ell \int_0^T |Z_t|^2 \dif t\right)=\infty.
\Ee
\end{prop}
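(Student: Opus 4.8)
The idea is to feed the explicit eigenvalue list of Proposition~\ref{pop46} into Corollary~\ref{cor3}, and then to recognise the resulting infinite series as a Riemann sum for $F(\ell)$. The easy half is the divergent case: by Proposition~\ref{pop46} we have $\gamma_{T,1}=\frac 2r$ for \emph{every} $T>0$, so if $\ell\ge\frac r2=\frac{1}{\gamma_{T,1}}$ then Corollary~\ref{cor3} gives $\E^{\mu}\exp(\ell\int_0^T|Z_t|^2\dif t)=\infty$ for all $T$, and \eqref{e:LamInf} follows immediately.

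For $\ell<\frac r2$, note first that $0<\gamma_{T,k}\le\gamma_{T,1}=\frac 2r$ for all $k$, so each factor $1-\ell\gamma_{T,k}$ is strictly positive; together with the trace identity $\sum_k\gamma_{T,k}=T<\infty$ and the bound $|\log(1-x)|\le |x|/(1-|x|)$ this shows $\sum_k|\log(1-\ell\gamma_{T,k})|<\infty$, so Corollary~\ref{cor3} gives
\[
\frac 1T\log\E^{\mu}\exp\Big(\ell\int_0^T|Z_t|^2\dif t\Big)=-\frac 1T\sum_{k\ge 1}\log(1-\ell\gamma_{T,k}).
\]
Next I would substitute \eqref{eigen0}. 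Writing $\nu_0:=\tilde\omega_0=0$ and letting $\nu_1=\omega_1,\nu_2=\tilde\omega_1,\nu_3=\omega_2,\dots$ be the positive roots of \eqref{cot2}--\eqref{tan2} in increasing order, the interlacing \eqref{bdx1} gives $\nu_j\in\big(\tfrac{(j-1)\pi}{T},\tfrac{j\pi}{T}\big)$ for every $j\ge1$, while $\gamma_{T,j+1}=\frac{2r}{r^2+\nu_j^2}$. Hence, putting $\phi(u):=\log\big(1-\tfrac{2\ell r}{r^2+u^2}\big)$,
\[
-\frac 1T\sum_{k\ge 1}\log(1-\ell\gamma_{T,k})=-\frac 1T\,\phi(0)-\frac 1T\sum_{j\ge 1}\phi(\nu_j),
\]
and the boundary term is $O(1/T)\to0$.

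It then remains to show $\frac 1T\sum_{j\ge1}\phi(\nu_j)\to F(\ell)$. On $[0,\infty)$ the function $\phi$ is bounded, continuous, and monotone (increasing if $\ell>0$, decreasing if $\ell<0$), with $\int_0^\infty|\phi(u)|\dif u<\infty$ by exactly the estimates used in the proof of Lemma~\ref{lem51}. Using this monotonicity and $\nu_j\in\big(\tfrac{(j-1)\pi}{T},\tfrac{j\pi}{T}\big)$, the sum $\frac1T\sum_{j\ge1}\phi(\nu_j)$ is sandwiched between $\frac1T\sum_{j\ge1}\phi\big(\tfrac{(j-1)\pi}{T}\big)$ and $\frac1T\sum_{j\ge1}\phi\big(\tfrac{j\pi}{T}\big)$; both of these are, up to the vanishing term $\frac1T\phi(0)$, Riemann sums of mesh $\tfrac\pi T$ for $\frac1\pi\int_0^\infty\phi(u)\dif u$, and both converge to it because $\phi$ is monotone and absolutely integrable (the gap between the left- and right-endpoint sums is $\tfrac\pi T|\phi(0)|\to0$). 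Finally the substitution $u=\pi x$ gives $\frac1\pi\int_0^\infty\phi(u)\dif u=\int_0^\infty\log\big(1-\tfrac{2\ell r}{r^2+\pi^2x^2}\big)\dif x=F(\ell)$, which yields the asserted limit $-F(\ell)$; the translation of $\ell<\frac r2$ into the stated interval for $\lambda$ is just the quadratic inequality $\lambda(1+\lambda)<\frac{r\cos\theta}{4\sin^2\theta}$.

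\textbf{Main obstacle.} The only genuinely analytic point is the convergence of these \emph{improper} Riemann sums, i.e.\ controlling the tails $\sum_{j>N}\phi(\nu_j)$ uniformly in $T$; this is exactly where monotonicity and absolute integrability of $\phi$ are needed (for $\ell>0$ one has $0\ge\phi(\nu_j)\ge\phi(j\pi/T)$, so the partial sums telescope into integral bounds, and similarly for $\ell<0$). Everything else is algebraic bookkeeping with the eigenvalue formulas of Proposition~\ref{pop46} and the divergence criterion of Corollary~\ref{cor3}.
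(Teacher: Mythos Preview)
Your proposal is correct and follows essentially the same route as the paper: both feed the eigenvalue list of Proposition~\ref{pop46} into Corollary~\ref{cor3}, use the interlacing \eqref{bdx1} to sandwich $\sum_k\log(1-\ell\gamma_{T,k})$ between the left- and right-endpoint sums $\sum_k\log\big(1-\tfrac{2\ell r}{r^2+\pi^2(k/T)^2}\big)$, and then recognise these as Riemann sums for $F(\ell)$; the divergent case is handled identically via $\gamma_{T,1}=2/r$. Your write-up is in fact a bit more explicit than the paper's on two points---the absolute convergence of the eigenvalue sum via the trace identity, and the tail control needed for the improper Riemann sum---but the structure of the argument is the same.
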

\begin{proof}
Since $\ell<\frac{r}{2}$, 
it follows from Corollary~\ref{cor3} that
\Be
\begin{split}
\frac{1}{T} \log \E^{\mu} \exp \left(\ell \int_0^T |Z_t|^2 \dif t\right)=-\frac{1}{T} \sum_{k \ge 1} \log(1- \ell \gamma_{T,k}).
\end{split}
\Ee
By \eqref{eigen0} and \eqref{bdx}, we have
\begin{equation*}
\sum_{k \ge 1} \log(1- \ell \frac{2r}{r^2+ \pi^2(\frac{k}{T})^2} ) \le \sum_{k \ge 2} \log(1- \ell \gamma_{T,k})\le \sum_{k \ge 0} \log(1- \ell  \frac{2r}{r^2+ \pi^2(\frac{k}{T})^2} )
\end{equation*}
for $\ell \le 0$, and
\begin{equation*}
\sum_{k \ge 0} \log(1- \ell  \frac{2r}{r^2+ \pi^2(\frac{k}{T})^2} )\le \sum_{k \ge 2} \log(1- \ell \gamma_{T,k})\le \sum_{k \ge 1} \log(1- \ell \frac{2r}{r^2+ \pi^2(\frac{k}{T})^2} )
\end{equation*}
for $\ell \in (0, \frac r2)$.

As $T\to\infty$, by Lemma~\ref{lem51} we have
\begin{align*}
\frac{1}{T}  \sum_{k \ge 1} \log(1- \ell \frac{2r}{r^2+ \pi^2(\frac{k}{T})^2} )& \to  \int_0^{\infty} \log(1- \frac{2\ell r}{r^2+ \pi^2 y^2} )  \dif y,\\
\frac{1}{T}   \sum_{k \ge 0} \log(1- \ell  \frac{2r}{r^2+ \pi^2(\frac{k}{T})^2} )& \to  \int_0^{\infty} \log(1- \frac{2\ell r}{r^2+ \pi^2 y^2} )  \dif y.
\end{align*}
Therefore, as $T\to\infty$,
\begin{equation*}
  -\frac{1}{T} \sum_{k \ge 1} \log(1- \ell \gamma_{T,k})\to - \int_0^{\infty} \log(1- \frac{2\ell r}{r^2+ \pi^2 y^2} )  \dif y.
\end{equation*}

Now we show the second limit. Since $  \ell  \ge \frac{r}{2}=\left( \gamma_{T,1}\right)^{-1}$, it follows from Corollary~\ref{cor3} that for any $T> 0$,
\
\Be
\frac{1}{T}\log \E^{\mu} \exp \left(\ell \int_0^T |Z_t|^2 \dif t\right)=\infty,
\Ee
which immediately gives the second limit.
\end{proof}

\noindent{\it Proofs of Theorems~\ref{main_thm} and \ref{main_thm 2}.\,}
Recall the relation \eqref{crm func 1}.
\eqref{crm func 000} follows from Proposition \ref{prop42}. Since $ \ell(\lambda)=\frac{2\sin^2\theta}{\cos\theta}\lambda(1+\lambda)$, we have that
$ \ell(\lambda)= \ell(-(1+\lambda))$ and
\begin{align*}
  \Lambda(\lambda)  & =  \left\{
      \begin{array}{ll}
     -F(\ell(\lambda)), &\quad \text{for } \lambda \in \left(-\frac 12-\frac 12 \sqrt{1+\frac{r \cos \theta}{\sin^2 \theta}},-\frac 12+\frac 12 \sqrt{1+\frac{r \cos \theta}{\sin^2 \theta}}\right) ,  \\
     \infty , &\quad \text{for } \lambda \notin \left(-\frac 12-\frac 12 \sqrt{1+\frac{r \cos \theta}{\sin^2 \theta}},-\frac 12+\frac 12 \sqrt{1+\frac{r \cos \theta}{\sin^2 \theta}}\right),
      \end{array}
\right.\\
    &=\left\{
      \begin{array}{ll}
     -F(\ell(-(1+\lambda))), &\quad \text{for } \lambda \in \left(-\frac 12-\frac 12 \sqrt{1+\frac{r \cos \theta}{\sin^2 \theta}},-\frac 12+\frac 12 \sqrt{1+\frac{r \cos \theta}{\sin^2 \theta}}\right) ,  \\
     \infty , &\quad \text{for } \lambda \notin \left(-\frac 12-\frac 12 \sqrt{1+\frac{r \cos \theta}{\sin^2 \theta}},-\frac 12+\frac 12 \sqrt{1+\frac{r \cos \theta}{\sin^2 \theta}}\right),
      \end{array}
\right.\\
&=\Lambda(-(1+\lambda)),
\end{align*}
Since $\theta \in \left(-\frac{\pi}2,0\right) \cup \left(0,\frac{\pi} 2\right)$, Proposition~\ref{prop42} implies
\begin{equation*}
  \mathcal{D}_{\lambda}\triangleq\set{\lambda: \Lambda(\lambda)<\infty}=\left(-\frac 12-\frac 12 \sqrt{1+\frac{r \cos \theta}{\sin^2 \theta}},-\frac 12+\frac 12 \sqrt{1+\frac{r \cos \theta}{\sin^2 \theta}}\right),
\end{equation*}
 From Lemma~\ref{lem51}, for $\lambda\in \mathcal{D}_{\lambda}$ we have
\begin{equation*}
   \Lambda'(\lambda)=\frac{2\sin^2\theta}{\cos\theta}(1+2\lambda)\sqrt{\frac{r}{r-2\ell}}.
\end{equation*}
 As $\lambda\in \partial \mathcal{D}_{\lambda}$, $\ell \rightarrow \frac r2-$ and thus $\Lambda'(\lambda) \rightarrow \infty$. Hence, the Cram\'{e}r function $\Lambda(\cdot)$ is essentially smooth.

 By G\"{a}rtner-Ellis theorem \cite{demzeit} implies that the LDP holds with the good rate functions $I(x)$ such that
\begin{align*}
  I(x) &=\sup\set{x\lambda +F(\ell(\lambda)):\, \lambda \in \left(-\frac 12-\frac 12 \sqrt{1+\frac{r \cos \theta}{\sin^2 \theta}},-\frac 12+\frac 12 \sqrt{1+\frac{r \cos \theta}{\sin^2 \theta}}\right)} \\
    &=\sup \set{x\lambda(\ell)+F(\ell):\,-\frac{\sin^2\theta}{2\cos\theta}\le \ell<\frac{r}{2}},
\end{align*}
where $F(\ell)$ is given as (\ref{fl}) and $\lambda(\ell)$ is the inverse function of $
   \ell=\frac{2\sin^2\theta}{\cos\theta}\lambda(1+\lambda)$.
By differentiating the function $s(\ell):=x\lambda(\ell)+F(\ell)$, we have that the unique zero point is
\begin{equation*}
  \ell_0=\frac{(x^2c^2-1)r}{2(x^2c+2r)c}
\end{equation*}
with $c=\frac{\cos \theta}{2 \sin^2 \theta}$.
Substituting it into $s(\ell)$, we obtain (\ref{rate func}) since $\lim\limits_{\ell\to \frac{r}{2}-}F(\ell)=-\infty$.

We have proved above
$$\Lambda(\lambda)=\Lambda(-(1+\lambda)),$$
which implies that $I(x)=I(-x)-x$ similar to the proof
of  \cite[Theorem 2.4]{jiangzhang}. We can also derive $ I(x)=I(-x)-x$
from \eqref{rate func} by a direct but complicated calculation.
{\hfill\large{$\Box$}}
\vskip 0.2cm {\small {\bf  Acknowledgements}\   This work was
partly supported by  NSFC(No.11101137) and Hunan Provincial NSFC(No. 2015JJ2055). H. Ge is supported by NSFC(No.21373021), and the Foundation for Excellent Ph.D. Dissertation from
the Ministry of Education in China (No. 201119). J. Xiong was  supported by Macao Science and Technology Fund FDCT 076/2012/A3 and Multi-Year Research Grants of the University of Macau Nos. MYRG2014-00015-FST and MYRG2014-00034-FST. L. Xu is supported by the grant SRG2013-00064-FST, the grant Science and Technology Development Fund, Macao S.A.R FDCT  049/2014/A1 and the grant MYRG2015-00021-FST.

\end{document}